\theoremstyle{plain}
\newtheorem{thm}{Theorem}[section]
\newtheorem{lem}[thm]{Lemma}
\newtheorem{cor}[thm]{Corollary}
\newtheorem{prop}[thm]{Proposition}
\theoremstyle{definition}
\newtheorem*{setup}{\bf The Setup for \S 3.1}
\newtheorem*{setup2}{\bf The Setup for \S 3.2}
\newtheorem*{thma}{Remark}
\newtheorem*{con1}{Condition (1)'}
\newtheorem*{con2}{Condition (c)'}
\newtheorem{defn}[thm]{Definition}
\newtheorem{rmk}[thm]{Remark}
\numberwithin{thm}{section}
\newcommand{\alg}{{\rm alg}}
\newcommand{\Strat}{{\rm Strat}}
\newcommand{\Hom}{{\rm Hom}}
\newcommand{\Mor}{{\rm Mor}}
\newcommand{\Spec}{{\rm Spec \,}}
\newcommand{\Conn}{{\rm Conn}}
\newcommand{\Coker}{{\rm Coker}}
\newcommand{\A}{{\mathbb A}}
\newcommand{\C}{{\mathbb C}}
\newcommand{\N}{{\mathbb N}}
\newcommand{\Q}{{\mathbb Q}}
\newcommand{\T}{{\mathbb T}}
\newcommand{\Mod}{\text{\sf Mod}}
\newcommand{\Rep}{\text{\sf Rep}}
\begin{document}
\title{The Homotopy Sequence of the Algebraic Fundamental Group}
\author{ Lei Zhang }
 \address{
Universit\"at Duisburg-Essen, FB6, Mathematik, 45117 Essen, Germany}
\email{lei.zhang@uni-due.de}
\thanks{This work was supported by the Sonderforschungsbereich/Transregio 45 "Periods, moduli
spaces and the arithmetic of algebraic varieties" of the DFG}
\date{April 15, 2012}

\begin{abstract} In this paper, we prove that
the homotopy sequence of the algebraic fundamental group is exact if
the base field is of characteristic 0. So in particular, the K\"unneth
formula holds in characteristic 0.
\end{abstract}

\maketitle
\section{Introduction}

Let  $f:X\to S$ be a separable proper surjective morphism with
geometrically connected fibres between locally noetherian connected
schemes, $x\hookrightarrow X$ be a geometric point, $s=f(x)$, ${X_s}$ be the fibre at the geometric point $s$.
Grothendieck proved in \cite[Expos\'e X, Corollaire 1.4]{SGA1} that
there is a homotopy exact sequence for the \'etale fundamental group:
$$\pi_1^{\text{\'et}}({X_s},x)\to\pi_1^{\text{\'et}}(X,x)\to\pi_1^{\text{\'et}}(S,s)\to1.$$
In particular, one can take $X,Y$ to be two locally
noetherian connected $k$-schemes with $k=\bar{k}$ and assume that $Y$ is
proper separable over $k$. If we take a $k$-point $z=(x,y): \Spec(k)\to
X\times_kY$,  we can get a canonical homomorphism of group schemes
$$\pi_1^{\text{\'et}}(X\times_kY,z)\to\pi_1^{\text{\'et}}(X,x)\times\pi_1^{\text{\'et}}(Y,y).$$
Applying the homotopy exact sequence we see that the canonical homomorphism is an isomorphism. This is called
the K\"unneth formula for the \'etale fundamental group.

If $X$ is a smooth geometrically connected scheme over a field $k$
with a rational point $x\in X(k)$, we can consider the category of
$O_X$-coherent $D_{X/k}$-modules
$\Mod_c(D_{X/k})$. Let $\omega_x$ be the functor
$\Mod_c(D_{X/k})\to {\rm Vec}_k$ sending any $O_X$-coherent
$D_{X/k}$-module $M$ to $M|_{x}$ (the restriction of $M$ at $x\in X$). The category
$\Mod_c(D_{X/k})$ together with $\omega_x$ is a neutral Tannakian
category, and its Tannakian group $\pi^{\alg}(X,x)$ is defined to be
the algebraic fundamental group of $(X,x)$.  $\pi^{\alg}(X,x)$ is functorial in $(X,x)$. So if we have a
proper smooth map $f: X\to S$ between two smooth connected schemes
over a field $k$ with geometrically connected fibres, if
$x\in X(k)$ and $f(x)=s$, then we get a
sequence of maps\begin{equation}\label{homotopy sequence}
\pi^{\alg}(X_s,x)\to\pi^{\alg}(X,x)\to\pi^{\alg}(S,s)\to 1.\tag{$*$}\end{equation}

\begin{thm}Let $f: X\to S$ be a proper smooth
morphism between two smooth connected schemes of finite type over a
field $k$ of characteristic 0. Suppose that all the geometric fibres of $f$ are connected.
Let $x\in X(k)$, $s\in S(k)$ and $f(x)=s$. Then the homotopy sequence {\rm(\ref{homotopy sequence})}
 is exact.
\end{thm}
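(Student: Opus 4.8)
The plan is to recast the statement in Tannakian terms and then verify the Esnault--Hai--Sun criterion for the exactness of a sequence of affine group schemes. Write $\sC(X)=\Mod_c(D_{X/k})$, and similarly $\sC(S)$ and $\sC(X_s)$; in characteristic $0$ these are the categories of vector bundles with integrable connection. The morphisms $X_s\hookrightarrow X\xrightarrow{f}S$ induce a pullback tensor functor $f^{*}\colon\sC(S)\to\sC(X)$ and a fibre-restriction tensor functor $j^{*}\colon\sC(X)\to\sC(X_s)$, and on Tannakian duals these recover precisely the two arrows of (\ref{homotopy sequence}). By the criterion of Esnault--Hai--Sun, the sequence (\ref{homotopy sequence}) is exact once one checks three things: (1) $f^{*}$ is fully faithful with essential image closed under subobjects, equivalently $\pi^{\alg}(X,x)\to\pi^{\alg}(S,s)$ is faithfully flat; (2) for $M\in\sC(X)$, the restriction $j^{*}M$ is a trivial connection on $X_s$ if and only if $M\cong f^{*}N$ for some $N\in\sC(S)$, which gives exactness in the middle; and (3) every object of $\sC(X_s)$ is a subquotient of $j^{*}M$ for some $M\in\sC(X)$, so that the image of $\pi^{\alg}(X_s,x)$ fills out the whole kernel.

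To prove (1), I would first use that $f$ is proper and smooth with geometrically connected (hence reduced) fibres, and $S$ normal, to get $f_{*}\sO_X=\sO_S$. Full faithfulness then follows by computing $\Hom_{\sC(X)}(f^{*}M,f^{*}N)$ as the horizontal global sections of $f^{*}(M^{\vee}\otimes N)$ and using the projection formula together with $f_{*}\sO_X=\sO_S$ to identify these with the horizontal global sections of $M^{\vee}\otimes N$ on $S$, that is with $\Hom_{\sC(S)}(M,N)$. For closure under subobjects, a horizontal subbundle $M'\subseteq f^{*}N$ restricts on each fibre to a horizontal subbundle of the trivial connection $N_{s'}\otimes\sO_{X_{s'}}$; since $X_{s'}$ is proper and connected such a subbundle is constant, of the form $W_{s'}\otimes\sO_{X_{s'}}$, and these subspaces glue to a subbundle $N'\subseteq N$ on $S$ with $f^{*}N'=M'$.

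For (2), the implication $M\cong f^{*}N\Rightarrow j^{*}M$ trivial is immediate, since $f|_{X_s}$ is the constant map to $s$. The converse is the first serious point. Given $M$ with $j^{*}M$ trivial, the integrability of the connection lets one parallel transport the fibre-horizontal sections of $M|_{X_s}$ in the $S$-direction; flatness guarantees consistency, so triviality of $M|_{X_{s'}}$ is an \emph{open} condition on $s'$. It is also closed, by upper semicontinuity of the dimension of the space of horizontal sections on the fibres, so by connectedness of $S$ every geometric fibre of $M$ is trivial. Then $N:=\mathcal{H}^{0}_{\mathrm{dR}}(X/S,M)$ is locally free of rank $\rank M$ and carries its Gauss--Manin connection, its formation commutes with base change, and the adjunction morphism $f^{*}N\to M$ is an isomorphism because it is one on each fibre; hence $M$ is pulled back from $S$.

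Condition (3) is the heart of the matter and the main obstacle: one must show that an arbitrary connection $W$ on the fibre $X_s$ is a subquotient of the restriction of some connection on all of $X$. I would factor $j^{*}$ through the category of relative connections $\Mod_c(D_{X/S})$ and proceed in two stages: first extend $W$ to a relative connection on $X/S$ over the formal neighbourhood of $s$, algebraizing by Grothendieck's existence theorem (here properness of $f$ is essential) while controlling the deformation-theoretic obstructions, which lie in cohomology groups $H^{i}(X_s,\mathcal{E}nd(W)\otimes\cdots)$; then promote the relative connection to an integrable one on $X$, up to subquotient. Reducing $\dim S$ by a Lefschetz-type d\'evissage to the case of a curve base should make the extension problem tractable. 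This step is where characteristic $0$ and the properness of $f$ are used most heavily; the verifications (1) and (2) then combine with the Esnault--Hai--Sun criterion to yield the exactness of (\ref{homotopy sequence}).
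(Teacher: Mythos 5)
Your verifications of the surjectivity of $\pi^{\alg}(X,x)\to\pi^{\alg}(S,s)$ and of the ``middle'' condition follow the same lines as the paper: full faithfulness of $f^*$ via the projection formula and $f_*\sO_X=\sO_S$, stability under subobjects by pushing a horizontal subsheaf down to $S$, and the identification of the maximal pulled-back subobject with $f^*H^0_{DR}(X/S,-)$ via Katz's base-change theorem for the Gauss--Manin connection. But your reduction of exactness to three conditions is already misaligned with the criterion \ref{General Theory}: you omit condition (b) (that the maximal trivial subobject $W_0$ of $j^*V$ be realized as $j^*V_0$ for a subobject $V_0\subseteq V$ --- in the paper this is exactly Theorem 3.4, supplied by the same Gauss--Manin machinery you invoke), and, more seriously, your condition (3) is not the right one. ``Every object of $\sC(X_s)$ is a subquotient of $j^*M$ for some $M\in\sC(X)$'' is precisely the characterization, in \ref{General Theory}(2), of $\pi^{\alg}(X_s,x)\to\pi^{\alg}(X,x)$ being a \emph{closed immersion}. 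The theorem does not assert injectivity of the left arrow, and injectivity fails in general; the correct condition is (c)' of \ref{noninj}, which only requires those $W$ that already occur as quotients of restrictions $j^*W'$ to embed into some $j^*V$. By targeting all of $\sC(X_s)$ you have set yourself a statement that is in general unprovable, and your stated inference (``so that the image fills out the whole kernel'') does not follow from it anyway --- that is what (b) together with (c)' delivers.

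Even granting the correct formulation of (c)', your sketch for it has no substance at the point of real difficulty. Extending an arbitrary flat connection on $X_s$ to a relative connection on a formal neighbourhood is obstructed: one needs a compatible family of deformations of the pair (bundle, connection) over all Artinian thickenings of $X_s$ before Grothendieck existence can say anything, and these deformations meet nonvanishing obstruction classes in general (already the bundle need not deform); neither your appeal to $H^i(X_s,\mathcal{E}nd(W)\otimes\cdots)$ nor the proposed ``Lefschetz-type d\'evissage'' to a curve base removes them. The paper circumvents this entirely by a two-step strategy with no analogue in your plan: first it proves (c)' when $s$ is a \emph{generic geometric point} --- a subobject of $(E,\nabla_E)|_{X_s}$ is spread out over a finite \'etale cover of an open $U_0\subseteq S_0$ (Lemma \ref{lemma 3.5}), then dominated by an object defined over the base field using finite \'etale pushforward and the determinant trick of [EP, Theorem 5.10] (Lemma \ref{lemma 3.6}, Proposition \ref{lemma 3.4}), with the auxiliary Tannakian categories $\Mod_c(D_{S/k},s)$ of germs mediating between open subsets of $S_0$ and $S$ itself; second, it transfers exactness from the generic point to an arbitrary rational point by a transcendental comparison: reduce to $k=\C$ via Proposition \ref{base change}, identify the algebraic closure $K$ of $\kappa(S)$ abstractly with $\C$, and compare base points $s,s'$ using Ehresmann's theorem and the Riemann--Hilbert correspondence (Lemma \ref{3.15}). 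Without some replacement for this spreading-out-plus-comparison mechanism, your step (3) is a gap, not a proof.
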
 In the proof, we first apply the exactness criterion \ref{General Theory} in the general Tannakian category theory to the neutral Tannakian category of  $O_X$-coherent
$D_{X/k}$-modules with a fibre functor induced by a "generic geometric point". We check that each condition of the criterion is satisfied in this situation, and deduce that the homotopy sequence is exact for schemes with a "generic geometric point". Then we use a little transcendental method to show that if the homotopy sequence is exact for schemes with a "generic geometric point" then it is exact for schemes with arbitrary $k$-rational base point. Since the category of  $O_X$-coherent
$D_{X/k}$-modules is the same as the category of $O_X$-coherent flat connections when the base field is of characteristic 0, we are able to work with flat connections instead of the more complicated $D_{X/k}$-modules. For this reason the proof is hard to be generalized to positive characteristics. If $X$ is a scheme \textit{proper} over  $\C$, then the algebraic fundamental group is the algebraic completion of the topological fundamental group, so one might be tempted to use the homotopy exact sequence of the topological fundamental group to prove the theorem. However, it turns out that the algebraic completion functor does not behave well with respect to exactness, so we have to  work with the  category of flat connections instead. This makes our proof more algebraic.

A crucial part ($\S 3.2$) of the proof of this theorem is based on \cite{E}. The main idea of that section is from the unpublished letter.

As a direct corollary of the homotopy exact sequence in characteristic 0 we have:\begin{cor}Let $T$ and $S$ be   smooth
geometrically connected schemes over a field $k$ of characteristic 0. Assume that $T$ is  proper over $k$. Then the
canonical $k$-group scheme homomorphism
$$\pi^{\alg}(T\times_kS,(t,s))\to
\pi^{\alg}(T,t)\times_k\pi^{\alg}(S,s)$$ is an isomorphism. In other words, the K\"unneth
formula holds in characteristic 0.
\end{cor}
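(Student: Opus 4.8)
The plan is to deduce the Künneth formula from the homotopy exact sequence applied to the second projection, using the extra structure that the product supplies. Since $T$ is proper, smooth and geometrically connected over $k$, the projection $f=\mathrm{pr}_S\colon T\times_k S\to S$ is proper and smooth, its geometric fibres are all isomorphic to $T_{\bar k}$ and hence connected, and $T\times_k S$, $S$ are smooth connected of finite type over $k$. The base point $(t,s)$ is $k$-rational, lies over $s\in S(k)$, and the fibre $f^{-1}(s)$ is canonically $T$ with $(t,s)$ corresponding to $t$. Thus the main theorem applies and the homotopy sequence (\ref{homotopy sequence}) becomes
\[
\pi^{\alg}(T,t)\xrightarrow{\ \iota\ }\pi^{\alg}(T\times_k S,(t,s))\xrightarrow{\ p\ }\pi^{\alg}(S,s)\to 1 ,
\]
which I abbreviate as $H\xrightarrow{\iota}G\xrightarrow{p}Q\to 1$, with $p$ the map induced by $f=\mathrm{pr}_S$.

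Next I would produce a retraction and a section from the product structure. By functoriality of $\pi^{\alg}$, the first projection $\mathrm{pr}_T\colon T\times_k S\to T$ induces $r\colon G\to H$; since the composite of the fibre inclusion $T\hookrightarrow T\times_k S$ with $\mathrm{pr}_T$ is $\id_T$, we get $r\circ\iota=\id_H$. In particular $\iota$ is a split monomorphism, hence a closed immersion, so the sequence is short exact, $1\to H\xrightarrow{\iota}G\xrightarrow{p}Q\to 1$, and $\iota$ identifies $H$ with the closed normal subgroup $\ker p$, with $r$ restricting to the inverse isomorphism $\ker p\xrightarrow{\sim}H$. Likewise the section $j_S\colon S\to T\times_k S$, $s'\mapsto(t,s')$, of $f$ induces $\sigma\colon Q\to G$ with $p\circ\sigma=\id_Q$, and $r\circ\sigma$ is trivial because $\mathrm{pr}_T\circ j_S$ is the constant map to $t$. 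The canonical homomorphism of the corollary is exactly $\varphi=(r,p)\colon G\to H\times_k Q$, and by construction $\varphi\circ\iota=(\id_H,1)$ and $\varphi\circ\sigma=(1,\id_Q)$.

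It then remains to check that $\varphi$ is an isomorphism of affine $k$-group schemes. For injectivity, $\ker\varphi=\ker r\cap\ker p$; but $\ker p=\iota(H)$ and $r$ restricts to an isomorphism on $\ker p$, so $\ker\varphi$ is trivial and $\varphi$ is a closed immersion. For surjectivity, for every $k$-algebra $R$ and every $(h,q)\in H(R)\times Q(R)$ the element $\iota(h)\,\sigma(q)\in G(R)$ maps under $\varphi$ to $(h,1)(1,q)=(h,q)$, so $\varphi$ is an epimorphism, i.e.\ faithfully flat. A homomorphism of affine group schemes over $k$ that is simultaneously a closed immersion and faithfully flat is an isomorphism, which proves the corollary.

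The argument is formal once the theorem is in hand, so I do not anticipate a genuine obstacle; the only point demanding care is the passage from a bijection on $R$-points to an isomorphism of group schemes, which is why injectivity is phrased as ``$\varphi$ is a closed immersion'' and surjectivity as ``$\varphi$ is faithfully flat'' rather than argued set-theoretically. A secondary point to confirm is that the fibre of $\mathrm{pr}_S$ over the $k$-rational point $s$ really is $T$ with base point $t$, so that the left term of (\ref{homotopy sequence}) is $\pi^{\alg}(T,t)$ and the retraction $r$ does split $\iota$.
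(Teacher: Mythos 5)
Your proof is correct and takes essentially the same route as the paper: the corollary is deduced from the homotopy exact sequence applied to the projection $\mathrm{pr}_S\colon T\times_k S\to S$, which is exactly how the paper treats it (compare its parallel K\"unneth argument, where a two-row diagram with identical outer terms reduces everything to exactness of that sequence). Your retraction/section computation with $\varphi=(r,p)$ simply makes the five-lemma step explicit, including the left-injectivity of $\pi^{\alg}(T,t)\to\pi^{\alg}(T\times_kS,(t,s))$ supplied by the splitting $\mathrm{pr}_T$.
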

{\bf Acknowledgments:}  I would like to express my deepest gratitude
to my advisor H\'el\`ene Esnault for proposing to me the questions
and guiding me through this topic with patience, in fact a crucial
part (\S 3.2) in the proof of the homotopy sequenece in
characteristic 0 is from her idea. I thank my co-advisor Ph\`ung
H\^{o} Hai for numerous helpful discussions.

\maketitle
\section{Preliminaries}\subsection{The general criterion} In \cite[Appendix Theorem A.1]{EPS}, H\'el\`ene Esnault, Ph\`ung H\^{o} Hai, Xiaotao
Sun formulated a necessary and sufficient condition for the exactness of a  sequence of
Tannakian groups in terms of the corresponding tensor functors.
Since the algebraic fundamental group is defined via Tannakian
duality, to prove the exactness of the homotopy sequence {\rm(\ref{homotopy sequence})} we only need to check the condition
for our specific category -- the category of $D$-modules. For the convenience of the reader we rewrite
the condition in the following theorem. See \cite[Appendix Theorem A.1]{EPS} for details.
\begin{thm}{\rm(\cite[Appendix Theorem A.1]{EPS})}\label{General Theory} Let $L\xrightarrow{q}G\xrightarrow{p}A$ be a sequence of
homomorphisms of affine group schemes over a field $k$. It induces a
sequence of functors:
$$\Rep_k(A)\xrightarrow{p^*}\Rep_k(G)\xrightarrow{q^*}\Rep_k(L),$$
where $\Rep_k(-)$ denotes the category of finite dimensional
representations of $-$ over $k$. Then we have
\begin{enumerate}\item  The group homomorphism $p:G\to A$ is surjective (faithfully flat) if and only if $p^*\Rep_k(A)$ is a full subcategory
of $\Rep_k(G)$ and closed under taking subquotients.
\item The group homomorphism $q: L\to G$ is injective (a closed immersion) if
and only if any object of $\Rep_k(L)$ is a subquotient of an object
of the form $q^*(V)$ for some $V\in \Rep_k(G)$.
\item Assume that $q$ is a closed immersion and that $p$ is
faithfully flat. Then the sequence
$L\xrightarrow{q}G\xrightarrow{p}A$ is exact if and only if the
following conditions are fulfilled:
\begin{enumerate}\item For an object $V\in \Rep_k(G)$, $q^*V\in\Rep_k(L)$ is trivial if and only if $V\cong p^*U$ for some $U\in
\Rep_k(A)$
\item Let $W_0$ be the maximal trivial subobject of $q^*V$ in $\Rep_k(L)$. Then there exists $V_0\subseteq V$ in $\Rep_k(G)$, such that
$q^*V_0\cong W_0$.
\item Any $W$ in $\Rep_k(L)$ is embeddable in
$q^*V$ for some $V\in \Rep_k(G)$.
\end{enumerate}
\end{enumerate}
\end{thm}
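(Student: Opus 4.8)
The plan is to treat the three parts separately, taking (1) and (2) as the standard dictionary between homomorphisms of affine group schemes and the induced tensor functors (Deligne--Milne, \emph{Tannakian Categories}, Prop.~2.21) and reserving the real work for (3). For (1), in the direction ($\Rightarrow$) I would use that a faithfully flat $p$ is an epimorphism, so the $G$-action on any $p^*V$ factors through $A$; this makes $p^*$ fully faithful ($G$-equivariant maps between pullbacks are automatically $A$-equivariant) and forces every $G$-subobject of $p^*V$ to be $A$-stable, hence of the form $p^*V'$, giving closure under subquotients. For ($\Leftarrow$) I would factor $p$ as $G\twoheadrightarrow G'\hookrightarrow A$ through its scheme-theoretic image and invoke Tannakian reconstruction: fullness and closure under subquotients realize $p^*\Rep_k(A)$ as a tensor subcategory corresponding to a quotient of $G$, and full faithfulness identifies that quotient with $A$, forcing $G'=A$. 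For (2), ($\Rightarrow$) follows because $\sO(G)\twoheadrightarrow\sO(L)$ exhibits the regular representation $\sO(L)$ as a quotient of $q^*\sO(G)$ while every $W\in\Rep_k(L)$ embeds into a sum of copies of $\sO(L)$; ($\Leftarrow$) follows by factoring $q$ through its image $L'$ and noting that the hypothesis forces $\Rep_k(L)=\Rep_k(L')$, hence $L=L'$.

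For (3), set $N:=\ker p$; since $q$ is a closed immersion and $p$ is faithfully flat, exactness of $L\xrightarrow{q}G\xrightarrow{p}A$ is equivalent to the single equality $L=N$ of closed subgroups of $G$, with $A=G/N$. In the direction ($\Rightarrow$), assuming $L=N$: condition (a) holds because $q^*V$ is trivial exactly when $N$ acts trivially on $V$, which by $A=G/N$ is exactly when $V\cong p^*U$; condition (b) holds because $N$ is normal in $G$, so the invariants $W_0=V^N$ form a $G$-subobject $V_0\subseteq V$ with $q^*V_0=W_0$; and condition (c) holds because $L$ is a closed subgroup, so the unit $W\to q^*\mathrm{Ind}_L^G W$ of restriction--coinduction is injective and $W$ lands in a finite-dimensional $G$-subobject $V$ of the ind-object $\mathrm{Ind}_L^G W$, giving an embedding $W\hookrightarrow q^*V$.

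The substance is the converse ($\Leftarrow$), where I expect the main obstacle. Assuming (a), (b), (c): the ``if'' part of (a) says $q^*p^*U$ is trivial for all $U$, so $p\circ q$ is trivial and $\im q\subseteq N$; as $q$ is a closed immersion, $L$ is then a closed subgroup of $N$, and it remains to prove $N\subseteq L$. The key step is to extract from (b) that for every $V\in\Rep_k(G)$ the fixed space $V^L$ (the maximal trivial subobject of $q^*V$) is a genuine $G$-subobject: (b) furnishes a $G$-subobject $V_0\subseteq V$ with $q^*V_0\cong V^L$, and since $L$ acts trivially on $V_0\hookrightarrow V$ we get $V_0\subseteq V^L$, whence equality of dimensions forces $V_0=V^L$. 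Two consequences follow. First, applying the ``only if'' part of (a) to $V_0$ gives $V_0\cong p^*U_0$, so $N$ acts trivially on $V^L$, i.e. $V^L\subseteq V^N$; as $L\subseteq N$ gives the reverse inclusion, $V^L=V^N$ for all $V$. Second, $G$-stability of every $V^L$ forces $L$ to be normal: passing to finite-dimensional subobjects of the regular representation $\sO(G)$ shows $\sO(G/L)$ is stable under right translation by $G$, which holds precisely when $h^{-1}Lh\subseteq L$ for all $h$. Finally I would reconstruct: the subcategory $\{V:V=V^L\}\subseteq\Rep_k(G)$ coincides with $\{V:V=V^N\}$ because $V^L=V^N$, and these are the representation categories of $G/\langle L\rangle^G$ and $G/N$; Tannakian duality gives $\langle L\rangle^G=N$, and since $L$ is normal, $\langle L\rangle^G=L$, so $L=N$ and the sequence is exact. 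The delicate points I anticipate are checking that (b) really upgrades $V^L$ to a $G$-subobject rather than merely an abstract isomorphism class, and confirming normality through the regular representation; condition (c), by contrast, I expect to use only as a necessary condition in the forward direction.
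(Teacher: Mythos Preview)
The paper does not give its own proof of this statement; it is quoted from \cite{EPS} with a pointer ``See \cite[Appendix Theorem A.1]{EPS} for details.'' So there is no in-paper argument to compare against, but your proposal has a genuine gap.

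Your converse in (3) never uses condition (c); you say you ``expect to use [it] only as a necessary condition in the forward direction.'' This cannot be right: (a) and (b) alone do not force exactness. Take $k$ of characteristic $0$, $G=\mathrm{SL}_2$, $L=B$ a Borel, $A=1$, so $N=\ker p=G$. Condition (a) holds because a $G$-representation on which $B$ acts trivially is trivial (the normal closure of $B$ is all of $G$). Condition (b) holds because $V^B=V^G$ for every $V\in\Rep_k(G)$: on the irreducible $V_n=\mathrm{Sym}^n(k^2)$ one checks directly that $V_n^B=0$ for $n>0$, so $V_0:=V^G\subseteq V$ does the job. Yet $B\neq G$, so $B\hookrightarrow G\to 1$ is not exact. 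What fails is exactly (c): the one-dimensional $B$-module given by $\left(\begin{smallmatrix}a&b\\0&a^{-1}\end{smallmatrix}\right)\mapsto a^{-1}$ is a quotient of $V_1|_B$ but embeds into no $V|_B$, since the only $B$-stable lines in $V_n|_B$ are highest-weight lines with non-negative weight.

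The faulty step is your normality argument. From ``$V^L$ is $G$-stable for all finite-dimensional $V$'' you pass to ``$\sO(G)^L$ is stable under right translation'' and then assert this ``holds precisely when $h^{-1}Lh\subseteq L$ for all $h$.'' The last equivalence is false in general: in the example above $\sO(G)^B=\sO(G/B)=\sO(\mathbb{P}^1)=k$, which is trivially $G$-stable although $B$ is not normal. When $G/L$ is not affine the invariant ring $\sO(G)^L$ is too small to detect $L$, so stability of $\sO(G)^L$ under right translation gives no information. Consequently your reconstruction step, which relies on $L$ being normal to identify $\{V:V=V^L\}$ with $\Rep_k(G/L)$, collapses. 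A correct proof of the converse has to bring (c) into play; it is not a redundancy but the condition that rules out examples like $B\subset\mathrm{SL}_2$.
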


\begin{rmk}\label{noninj}{\bf (I).} The equivalence condition for surjectivity can also be written as:

 \begin{con1}$p:G\to A$ is surjective if and only if $p^*$ is fully faithful and $p^*(\Rep_k(A))$ is stable under taking subobjects. \end{con1}

In fact, because $p^*$ is an exact functor, if  $p^*(\Rep_k(A))$ is stable under taking subobjects then it is stable under taking subquotients.\\\\
{\bf(II)}. In (3), if $q$ is not necessarily a closed immersion, then (c) should be replaced by:

\begin{con2}For any  $W'\in\Rep_k(G)$ and any quotient
$q^*W'\twoheadrightarrow W\in
\Rep_k(L),$ there exists $V\in \Rep_k(G)$ and
an imbedding $W\hookrightarrow
q^*V.$ \end{con2}

In fact, we could decompose the homomorphism $q: L\to G$ into a composition $L\twoheadrightarrow L'\hookrightarrow G$. Let $$\Rep_k(G)\xrightarrow{a^*} \Rep_k(L')\xrightarrow{b^*} \Rep_k(L)$$ be the corresponding tensor functors. By condition (1), (2) we know that $b^*$ is fully faithful and closed under taking subquotients and that any object in $\Rep_k(L')$ is a subquotient of $a^*W'$ where $W'\in \Rep_k(G)$. Since the exactness of $L\xrightarrow{q}G\xrightarrow{p}A\to0$ is equivalent to the exactness of $0\to L'\xrightarrow{a}G\xrightarrow{p}A\to0$, we could replace (c) by the condition that for any $W'\in\Rep_k(G)$ and any subquotient $W$ of $a^*W'$, there exists $V\in \Rep_k(G)$ and
an imbedding $W\hookrightarrow
a^*V.$ Now one can check formally that this condition is the same as (c)'.

\end{rmk}

 \maketitle
\section{The homotopy exact sequence in characteristic 0}
In this section $k$ is always a field of characteristic 0. In this
case the category $\Mod_c(D_{X/k})$ is the same as the category of
vector bundles with flat connections, so in the following we will
work purely in the category of vector bundles with flat connections
and still use $\Mod_c(D_{X/k})$ to denote this category.

\subsection{The conditions (a), (b) and the surjectivity}
\begin{setup}\label{setup1} Let $f:X\to S$ be a smooth proper morphism with geometrically connected fibres between two
smooth connected schemes of finite type over $k$, $s\in
S(k)$ be a rational point, $X_s$ be the fibre, $x\in X(k)$ be a
rational point lying above $s$, then by the functoriality of the
algebraic fundamental group we get a sequence of affine group
schemes
$$\pi^{\rm alg}(X_s,x)\to\pi^{\rm alg}(X,x)\to \pi^{\rm alg}(S,s)\to1,$$ which is called the homotopy sequence. We will show that the
sequence is exact  by checking the
conditions provided in  \ref{General Theory}.\end{setup}

\begin{thm}\label{complex} The homotopy sequence $$\pi^{\rm alg}(X_s,x)\to\pi^{\rm
alg}(X,x)\to \pi^{\rm alg}(S,s)\to1$$ is a complex, and the arrow
$\pi^{\rm alg}(X,x)\to \pi^{\rm alg}(S,s)$ is surjective.
\end{thm}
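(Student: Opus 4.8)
The plan is to translate everything into the Tannakian dictionary of \ref{General Theory} and then verify the two assertions by analyzing how flat connections behave under pullback along $f$ and restriction to the fibre. Under Tannakian duality the homomorphism $p:\pi^{\rm alg}(X,x)\to\pi^{\rm alg}(S,s)$ corresponds to the pullback functor $p^*=f^*:\Mod_c(D_{S/k})\to\Mod_c(D_{X/k})$, while $q:\pi^{\rm alg}(X_s,x)\to\pi^{\rm alg}(X,x)$ corresponds to restriction $q^*=(-)|_{X_s}:\Mod_c(D_{X/k})\to\Mod_c(D_{X_s/k})$. To show the sequence is a complex I would show that the composite $q^*p^*$ lands in trivial objects: for a connection $(V,\nabla)$ on $S$ the object $q^*p^*(V,\nabla)=(f^*V)|_{X_s}$ is the restriction of a pullback along the constant map $X_s\to\Spec k$, hence is canonically isomorphic to $\sO_{X_s}\otimes_k V|_s$ with its trivial (exterior-derivative) connection. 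Thus $q^*p^*$ factors through the trivial subcategory, which by Tannakian duality forces the composite $\pi^{\rm alg}(X_s,x)\to\pi^{\rm alg}(S,s)$ to be the trivial homomorphism; this is exactly the complex condition.

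For the surjectivity of $p$ I would invoke Condition (1)' of \ref{noninj}, which reduces the claim to two statements about $f^*$: that $f^*$ is fully faithful, and that its essential image is stable under passing to subobjects in $\Mod_c(D_{X/k})$. The single geometric input driving both is that, $f$ being proper with geometrically connected fibres over a field of characteristic $0$, one has $f_*\sO_X=\sO_S$ and the sheaf $\sM:=(f^*M)^{\nabla_{X/S}}$ of fibrewise-horizontal sections satisfies $f_*\sM\cong M$ compatibly with connections, for any $(M,\nabla)$ on $S$ (i.e. $f^*M$ is fibrewise the trivial connection, and the associated Gauss--Manin connection on $\sM$ recovers $\nabla$).

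Granting this, full faithfulness follows by taking $M=V^\vee\otimes W$: horizontal sections compute $\Hom$, and $\Gamma(X,f^*M)^{\nabla}=\Gamma(S,f_*\sM)^{\nabla}=\Gamma(S,M)^{\nabla}$, so $\Hom_{D_X}(f^*V,f^*W)=\Hom_{D_S}(V,W)$. For subobject-stability, given a sub-connection $E\hookrightarrow f^*V$ on $X$, its restriction $E|_{X_s}$ is a subobject of the trivial connection $(f^*V)|_{X_s}$, hence itself trivial (a subobject of a trivial representation of $\pi^{\rm alg}(X_s,x)$ is trivial), say $E|_{X_s}=\sO_{X_s}\otimes W_s$ with $W_s\subseteq V|_s$ of dimension $\rank E$. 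Setting $E_0:=f_*(E^{\nabla_{X/S}})\subseteq f_*\sM=V$, I expect $E_0$ to be a $\nabla$-stable subbundle with $f^*E_0=E$, the equality being checkable fibrewise by Nakayama since both are subbundles of $f^*V$ restricting to $\sO_{X_s}\otimes W_s$.

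The hard part will be the geometric input of the second paragraph: establishing $f_*\sM\cong M$ as connections, i.e. that the fibrewise-horizontal sections of $f^*M$ push forward to $M$ with the correct connection. This is where properness, geometric connectedness of the fibres, and characteristic $0$ are essential (for a non-proper fibre such as $\A^1$ the statement already fails), and where one must be careful that the Gauss--Manin connection on $\sM$ matches $\nabla$ and that the subsheaf $E_0$ is genuinely $\nabla$-stable rather than merely relatively horizontal. Everything else is formal manipulation within the Tannakian framework of \ref{General Theory}.
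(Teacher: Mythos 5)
Your reduction is the paper's own: both the complex part (restriction of $f^*(V,\nabla)$ to $X_s$ is pulled back from $\Spec(k)$ via the rational point $s$, hence trivial) and the translation of surjectivity into Condition (1)' of \ref{noninj} (full faithfulness of $f^*$ plus stability of its image under subobjects) match Theorem \ref{complex} exactly, and your full-faithfulness argument via horizontal sections of $V^\vee\otimes W$ is the paper's projection-formula argument in other words. The gap is in the subobject step, and it is concrete: what you isolate as "the single geometric input", namely $f_*\sM\cong M$ for $\sM=(f^*M)^{\nabla_{X/S}}$ a \emph{pullback} connection, is the easy half and does not deliver $f^*E_0=E$. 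Your "checkable fibrewise by Nakayama" step silently presupposes two nontrivial facts about the \emph{arbitrary} subobject $E\subseteq f^*V$: that $E_0=f_*(E^{\nabla_{X/S}})$ is locally free (indeed a subbundle of $V$, so that $f^*E_0$ is again a subbundle of $f^*V$), and that its formation commutes with passage to fibres at \emph{every} point $t\in S$ -- i.e. that every horizontal section of $E|_{X_t/\kappa(t)}$ extends to a relatively horizontal section near $t$, so that $f^*E_0\to E$ is fibrewise surjective. That is precisely degree-$0$ base change for the relative de Rham cohomology of $E$, a theorem (Katz, \cite[Section 8]{Katz}) rather than a formal consequence of $f_*\sO_X=\sO_S$; without it the Nakayama comparison has nothing to compare. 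Note also that you only verify triviality of $E|_{X_s}$ at the rational point $s$, whereas the fibrewise check must run over all (non-rational) $t\in S$; this part is harmless, since a subobject of the trivial object in $\Mod_c(D_{X_t/\kappa(t)})$ is trivial for every $t$, which is exactly the observation the paper makes.

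The paper avoids Gauss--Manin at this stage altogether and supplies the missing coherence input by a different mechanism: it pushes forward the whole subsheaf $F$ (not its relatively horizontal part), observes that $F|_{X_t}$ is free of rank $r=\rank F$ for every $t$ (subobject of a trivial relative connection), deduces from Mumford's cohomology-and-base-change corollary \cite[Page 48, Chapter 2, \S 5, Corollary 2]{Mum} that $f_*F$ is locally free of rank $r$ with base change, upgrades $f^*f_*F\to F$ and $f_*F\to E$ to subbundle inclusions via the local criterion of flatness \cite[Page 176, \S 22, Theorem 22.5]{Mats} (the first then an isomorphism by rank count), and finally constructs the connection $f_*\nabla_F$ on $f_*F\subseteq E$ by a diagram chase using that $0\to f^*\Omega^1_{S/k}\to\Omega^1_{X/k}\to\Omega^1_{X/S}\to 0$ is locally split. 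So to complete your route you must either import Katz's base change theorem for $H^0_{DR}$ (which the paper does invoke, but only afterwards, to verify conditions (a) and (b) of \ref{General Theory}), or replace your $E_0=f_*(E^{\nabla_{X/S}})$ by the paper's $f_*F$ and run the Mumford argument; as written, the proposal locates the difficulty in the wrong lemma and leaves the actual crux unproved.
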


\begin{proof}
Since $s\in S(k)$ is a rational point, the pull back of any object in
$\Mod_c(D_{S/k})$ is trivial in
$\Mod_c(D_{X_s/k})$, so the sequence is a complex. To show the surjectivity of
right arrow, one has to show that the functor
$f^*:\Mod_c(D_{S/k})\to\Mod_c(D_{X/k})$  is fully faithful and
stable under taking subobject.

The fact that $f^*$ is fully faithful follows readily from the
projection formula, so we only have to show that it is stable under
taking subobject. Suppose that we have an object $(E,\nabla_E)\in
\Mod_c(D_{S/k})$, and a subobject $(F,\nabla_F)\hookrightarrow
f^*(E,\nabla_E)$.

First of all, we claim that $f_*F$ is a locally free sheaf of rank equal
to that of $F$,  and the adjunction map $f^*f_*F\to F$ is an isomorphism. Moreover, the natural
map $f_*F\to E$ imbeds $f_*F$ as a subbundle of $E$ (locally split).
In fact, for any point $t\in S$, the restriction $F|_{X_t}$ of $F$ to ${X_t}$ is a free
$O_{X_t}$-module. This is because $f^*(E,\nabla_E)|_{X_t/\kappa(t)}$
is a trivial object in $\Mod_c(D_{X_t/\kappa(t)})$, and
$(F,\nabla_F)|_{X_t/\kappa(t)}\subseteq
f^*(E,\nabla_E)|_{X_t/\kappa(t)}$, so
$(F,\nabla_F)|_{X_t/\kappa(t)}$ has to be a trivial object too. This implies that
$F|_{X_t}$ is a free $O_{X_t}$-module. Thus, by \cite[ Page 48, Chapter 2, \S 5, Corollary 2]{Mum}, $f_*F$
satisfies base change in degree 0 at each point $t\in S$. Hence $f_*F$ is a vector bundle and its rank is equal to the rank of $F$ .
Since the  maps $f^*f_*F\to F$ and $f_*F\to E$ are injective after restricting  to each  fibre $X_t$ with $t\in S$, it follows from the local criterion of flatness \cite[ Page 176, \S 22, Theorem 22.5]{Mats} that they themselves are injective and their quotients are locally free (i.e. they are imbeddings of subbundles). Furthermore, since $f^*f_*F$ and $F$ have the same rank, the map $f^*f_*F\to F$ has to be an isomorphism. This finishes the
proof of the claim.

Now from the connection $\nabla_F$, we get a map:
$$f_*F\to f_*(F\otimes_{O_X}\Omega^1_{X/k})\cong f_*(f^*f_*F\otimes_{O_X}\Omega^1_{X/k})\cong f_*F\otimes_{O_S}f_*\Omega^1_{X/k}.$$
Since $f: X\to S$ is smooth, the exact sequence $$0\to
f^*\Omega^1_{S/k}\to \Omega^1_{X/k}\to \Omega^1_{X/S}\to 0$$splits locally. Hence we have an induced injection
$$f^*(E/f_*F)\otimes_{O_X}f^*\Omega^1_{S/k}\hookrightarrow
f^*(E/f_*F)\otimes_{O_X}\Omega^1_{X/k},$$ which is just
$$E/f_*F\otimes_{O_S}\Omega^1_{S/k}\hookrightarrow E/f_*F\otimes_{O_S}f_*\Omega^1_{X/k}$$ after applying $f_*$ and the projection formula. Now look at the following commutative diagram with exact rows:
$$\xymatrix{0\ar[r]&f_*F\otimes_{O_S}\Omega^1_{S/k}\ar[r]\ar[d]&E\otimes_{O_S}\Omega^1_{S/k}\ar[r]\ar[d]&E/f_*F\otimes_{O_S}\Omega^1_{S/k}\ar[d]\ar[r]&0\\
0\ar[r]&f_*F\otimes_{O_S}f_*\Omega^1_{X/k}\ar[r]&E\otimes_{O_S}f_*\Omega^1_{X/k}\ar[r]&E/f_*F\otimes_{O_S}f_*\Omega^1_{X/k}\ar[r]&0}.$$
Since $f_*F$ maps to $f_*F\otimes_{O_S}f_*\Omega^1_{X/k}$, its image
in $E/f_*F\otimes_{O_S}f_*\Omega^1_{X/k}$ is trivial. Because
$E/f_*F\otimes_{O_S}\Omega^1_{S/k}\hookrightarrow
E/f_*F\otimes_{O_S}f_*\Omega^1_{X/k}$ is injective, $f_*F\to
E\otimes_{O_S}\Omega^1_{S/k}$ factors through
$f_*F\otimes_{O_S}\Omega^1_{S/k}$. This proves that $f_*F\subseteq
E$ is equipped with a flat connection $f_*\nabla_F$ which makes
$(f_*F,f_*\nabla_F)$ a subobject of $(E,\nabla_E)$. Clearly
$f^*(f_*F,f_*\nabla_F)\cong (F,\nabla_F)$ as subobjects of
$f^*(E,\nabla_E)$. This finishes the proof.
\end{proof}

\begin{cor}
For any object
$(E,\nabla_E)\in \Mod_c(D_{X/k})$, the natural map
$$\phi:f^*H^0_{DR}(X/S, (E,\nabla_E))=f^*f_*E^{\nabla_{X/S}}\to E$$
is horizontal (i.e.  a morphism in $\Mod_c(D_{S/k})$) with respect to the Gauss-Manin connection on the
left. Moreover, $\phi$ is
injective and imbeds $f^*H^0_{DR}(X/S,(E,\nabla_E))$ as the " maximal pull back
subobject" of $(E,\nabla_E)$  in the following
sense:

If $(M,\nabla_M)\subseteq(E,\nabla_E)\in \Mod_c(D_{X/k})$ such that
$(M,\nabla_M)=f^*(N,\nabla_N)$ for some $(N,\nabla_N)\in
\Mod_c(D_{S/k})$, then the imbedding
$(M,\nabla_M)\subseteq(E,\nabla_E)$ factors through $\phi$.
\end{cor}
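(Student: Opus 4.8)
The plan is to treat the three assertions separately: the horizontality of $\phi$ and the maximality (factorization) property are essentially formal, so the geometric content is concentrated in the injectivity statement, which I will reduce to a fibrewise computation as in Theorem \ref{complex}.

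\emph{Horizontality.} First I would recall how the Gauss--Manin connection on $H:=f_*E^{\nabla_{X/S}}$ arises. Using the locally split exact sequence $0\to f^*\Omega^1_{S/k}\to\Omega^1_{X/k}\to\Omega^1_{X/S}\to 0$, a local section $e$ of $E^{\nabla_{X/S}}$ satisfies $\nabla_E(e)\in E\otimes_{O_X} f^*\Omega^1_{S/k}$, because its image in $E\otimes_{O_X}\Omega^1_{X/S}$ is $\nabla_{X/S}(e)=0$; integrability of $\nabla_E$ then forces $\nabla_E(e)$ to be again relatively horizontal, and applying $f_*$ and the projection formula yields $\nabla_{GM}$ on $H$. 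With this description, checking horizontality of the counit $\phi$ is a local computation: on a generator $g\otimes e$ (with $g\in O_X$ and $e$ relatively horizontal) both $\nabla_E\circ\phi$ and $(\phi\otimes\id)\circ f^*\nabla_{GM}$ return $dg\otimes e+g\,\nabla_E(e)$, so $\phi$ intertwines $f^*\nabla_{GM}$ with $\nabla_E$.

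\emph{Injectivity and locally split image.} Here I would run the same local-criterion-of-flatness mechanism as in Theorem \ref{complex}: once $f^*H$ and $E$ are vector bundles on $X$ and $\phi|_{X_t}$ is injective for every $t\in S$, Matsumura's Theorem 22.5 gives that $\phi$ is injective with locally free cokernel, i.e. an imbedding of subbundles. The fibrewise map $\phi|_{X_t}$ is the $O_{X_t}$-linear map $O_{X_t}\otimes_{\kappa(t)}(H\otimes\kappa(t))\to E|_{X_t}$ sending $1\otimes v$ to the horizontal section of $E|_{X_t}$ obtained by restricting a relatively horizontal section. A family of $\kappa(t)$-linearly independent horizontal sections of a flat connection on the connected proper fibre $X_t$ is pointwise linearly independent, since a nonzero horizontal section has zero locus that is both closed and open (the latter by local triviality of flat connections), hence empty; thus such sections are $O_{X_t}$-linearly independent, and $\phi|_{X_t}$ is injective as soon as the base change map $H\otimes\kappa(t)\to H^0_{DR}(X_t,E|_{X_t})$ is injective. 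The hard part will be exactly this local-freeness and base-change input for the relative de Rham sheaf $H$. I would obtain it as follows: $H$ is coherent, being $\Ker\big(f_*E\to f_*(E\otimes_{O_X}\Omega^1_{X/S})\big)$; it carries the integrable connection $\nabla_{GM}$ from the previous step; and a coherent sheaf with an integrable connection over a smooth variety in characteristic $0$ is locally free. Local freeness of $H$, combined with the base change corollary already cited in Theorem \ref{complex}, then yields $H\otimes\kappa(t)\cong H^0_{DR}(X_t,E|_{X_t})$, which gives the fibrewise injectivity and completes this step.

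\emph{Maximality.} Finally, given $(M,\nabla_M)=f^*(N,\nabla_N)\subseteq(E,\nabla_E)$, I would note that $M$, being pulled back from $S$, is relatively horizontal, i.e. $\nabla_{X/S}|_M=0$, so $M\subseteq E^{\nabla_{X/S}}$ as subsheaves of $E$. Applying $f_*$ and using $f_*f^*N\cong N$ (valid because $f_*O_X=O_S$ for $f$ proper with geometrically connected fibres, together with the projection formula) gives a horizontal inclusion $N=f_*M\hookrightarrow f_*E^{\nabla_{X/S}}=H$. Pulling back along the exact functor $f^*$ (exact since $f$ is smooth, hence flat) produces $M=f^*N\hookrightarrow f^*H$. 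The triangle identities for the adjunction show that the counit $\epsilon_M\colon f^*f_*M\to M$ is the identity when $M=f^*N$ (the unit $N\to f_*f^*N$ being an isomorphism), so by naturality of the counit the composite $M=f^*N\hookrightarrow f^*H\xrightarrow{\phi}E$ is the original inclusion. Hence every pullback subobject factors through $\phi$, which is the asserted maximality.
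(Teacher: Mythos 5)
Your three-part decomposition is reasonable, and the horizontality step agrees with the paper (which disposes of it in one line, ``from the definition of the Gauss--Manin connection''). But for injectivity you take a genuinely different route from the paper, and as written it has a real gap at exactly the point you flag as ``the hard part''. The paper's injectivity argument is purely categorical and needs none of your fibrewise machinery: the kernel $(K,\nabla_K)$ of $\phi$ is a subobject of the pullback $f^*H^0_{DR}(X/S,(E,\nabla_E))$, hence by Theorem \ref{complex} (stability of $f^*\Mod_c(D_{S/k})$ under subobjects) is itself of the form $f^*(K',\nabla_{K'})$; since $H^0_{DR}(X/S,-)$ is left exact and $H^0_{DR}(X/S,\phi)$ is an isomorphism (the triangle identity for the adjunction), one gets $K'\cong H^0_{DR}(X/S,f^*K')=0$, so $K=0$. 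Your substitute needs the base change isomorphism $H\otimes\kappa(t)\cong H^0_{DR}(X_t,E|_{X_t})$, and the justification you offer does not deliver it: the corollary of Mumford cited in Theorem \ref{complex} is a cohomology-and-base-change statement for a \emph{single} coherent sheaf, whereas $H^0_{DR}(X_t,E|_{X_t})$ is the degree-zero hypercohomology of the relative de Rham complex. Local freeness of $H=R^0$ alone does not even give injectivity of the degree-zero base change map (the obstruction is a $\mathrm{Tor}_1$ against the quotient $f_*E/H$, which is torsion-free but need not be flat when $\dim S>1$). The correct input is Katz's base change theorem for the Gauss--Manin connection: in characteristic $0$ \emph{all} the sheaves $H^i_{DR}(X/S,-)$ carry GM connections, hence are locally free, whence base change in every degree; this is precisely what the paper invokes from [Katz, Section 8] --- but only in the theorem \emph{following} this corollary. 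Separately, your pointwise-independence claim via ``zero locus open and closed'' only rules out relations whose coefficients lie in $\kappa(t)$; at a point with larger residue field you need either the standard minimal-relation (Wronskian-type) argument showing horizontal sections independent over constants are independent over $O_{X_t}$, or a reduction to closed points after base change to $\overline{\kappa(t)}$.

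In the maximality step your intermediate claim is literally false: for $M=f^*N$ the relative connection restricted to $M$ is not zero (locally $\nabla(g\otimes n)=dg\otimes n+g\,f^*\nabla_N(n)$, and $d_{X/S}g\neq 0$ for general $g$), so $M\not\subseteq E^{\nabla_{X/S}}$ as subsheaves --- indeed $E^{\nabla_{X/S}}$ is only a sheaf of $f^{-1}O_S$-modules, not an $O_X$-submodule of $E$. What is true, and what the paper actually uses, is the statement after applying $f_*$: the unit sections $1\otimes n$ are relatively horizontal, and since $f_*O_X=O_S$ every section of $M$ over a saturated open $f^{-1}(U)$ is of this form, giving $N\cong f_*M^{\nabla_{X/S}}\hookrightarrow f_*E^{\nabla_{X/S}}=H^0_{DR}(X/S,(E,\nabla_E))$. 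Your triangle-identity verification that the composite $f^*N\hookrightarrow f^*H\xrightarrow{\phi}E$ recovers the original inclusion is correct and is a detail the paper leaves implicit. With those two repairs --- Katz in place of Mumford, and the corrected horizontality-after-$f_*$ statement --- your proof would close, but it is considerably heavier than the paper's, whose kernel argument avoids base change for injectivity altogether.
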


\begin{proof}
The fact that $\phi$ is horizontal is from the definition of the
Gauss-Manin connection. To show that it is injective one considers
the kernel $(K,\nabla_K)$ of the map. One has: $$0\to
(K,\nabla_K)\to f^*H^0_{DR}(X/S,
(E,\nabla_E))\xrightarrow{\phi}(E,\nabla_E)$$ is exact. Since the
functor $H^0_{DR}(X/S,-)$ is left exact and $H^0_{DR}(X/S,\phi)$ is
an isomorphism, we have $H^0_{DR}(X/S,(K,\nabla_K))=0$. But by the
\ref{complex}, one has an object $(K',\nabla_{K'})\in \Mod_c(D_{S/k})$ such
that $f^*(K',\nabla_{K'})=(K,\nabla_{K})$. Thus as sheaves on $S$,
one has
$0=H^0_{DR}(X/S,(K,\nabla_K))=H^0_{DR}(X/S,f^*(K',\nabla_{K'}))\cong
K'$. This shows that $K=0$. Thus $\phi$ is injective.

Suppose $(M,\nabla_M)\subseteq(E,\nabla_E)\in \Mod_c(D_{X/k})$ such
that $(M,\nabla_M)=f^*(N,\nabla_N)$ for some $(N,\nabla_N)\in
\Mod_c(D_{S/k})$, then $N\cong f_*M^{\nabla_{X/S}}\hookrightarrow
f_*E^{\nabla_{X/S}}=H^0_{DR}(X/S,(E,\nabla_E))$. This shows that
$M\hookrightarrow E$ factors through $\phi$.
\end{proof}

\begin{thm}
For any
$(E,\nabla_E)\in \Mod_c(D_{X/k})$ the restriction of the subobject
$$(F,\nabla_F):=f^*H^0_{DR}(X/S, (E,\nabla_E))\hookrightarrow(E,\nabla_E)$$
to $X_s$ gives the maximal trivial subobject of $(E,\nabla_E)|_{X_s/k}$. So
in particular, the condition (a) and (b) of \ref{General Theory} (3) are satisfied.
\end{thm}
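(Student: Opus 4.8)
The plan is to read off both the statement and the two Tannakian conditions from a single base-change computation for the subbundle produced in the previous corollary. Write $N:=H^0_{DR}(X/S,(E,\nabla_E))=f_*E^{\nabla_{X/S}}$, so that $(F,\nabla_F)=f^*N$ and, by the corollary, $\phi\colon F\hookrightarrow E$ is a horizontal inclusion of a subbundle. First I would record the easy inclusion: since $F=f^*N$ is pulled back from $S$, its restriction to the fibre is trivial in $\Mod_c(D_{X_s/k})$ --- this is precisely the mechanism used in \ref{complex} to see that the homotopy sequence is a complex --- and because $\phi$ is a (locally split) subbundle embedding, $\phi|_{X_s}\colon F|_{X_s}\hookrightarrow E|_{X_s}$ is again a horizontal monomorphism. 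Hence $F|_{X_s}$ is a trivial subobject of $E|_{X_s}$ and is therefore contained in the maximal trivial subobject $W_0$.

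The content of the theorem is the reverse inclusion $W_0\subseteq F|_{X_s}$, and the first step here is to describe $W_0$ intrinsically. Since $X_s$ is proper and geometrically connected over $k$ (so $H^0(X_s,O_{X_s})=k$) and $\Char k=0$, the maximal trivial subobject of $(E|_{X_s},\nabla)$ is generated by its global horizontal sections: evaluation gives a horizontal isomorphism $H^0(X_s,(E|_{X_s})^{\nabla})\otimes_k O_{X_s}\xrightarrow{\ \sim\ }W_0$, because any trivial subobject $O_{X_s}^{\oplus m}\hookrightarrow E|_{X_s}$ has its defining basis of horizontal sections inside $H^0(X_s,(E|_{X_s})^{\nabla})$. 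On the other side, $F|_{X_s}=f^*N|_{X_s}=N_s\otimes_k O_{X_s}$ with $N_s:=N\otimes_{O_S}\kappa(s)$, and its space of global horizontal sections is exactly $N_s$. Thus both inclusions $F|_{X_s}\subseteq W_0\subseteq E|_{X_s}$ are generated by horizontal sections, and the reverse inclusion reduces to surjectivity of the base-change map $N_s=(f_*E^{\nabla_{X/S}})_s\to H^0(X_s,(E|_{X_s})^{\nabla})=H^0_{DR}(X_s/k,E|_{X_s})$.

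The main obstacle is exactly this base-change statement, and it is the place where smoothness, properness and $\Char k=0$ are all used. The naive semicontinuity estimate runs the wrong way --- the dimension of $H^0$ of the fibre can only jump up at special points --- so I cannot argue by a dimension count alone. Instead I would invoke the theory of the Gauss--Manin connection for the flat bundle $(E,\nabla_E)$ along the smooth proper morphism $f$: in characteristic $0$ the relative de Rham cohomology sheaves $\mathscr H^i_{DR}(X/S,(E,\nabla_E))$ are locally free, carry an integrable connection, and their formation commutes with arbitrary base change. (Over $\C$ this is Ehresmann together with Riemann--Hilbert, $R^if_*$ of the corresponding local system being again a local system; the general case follows by the Lefschetz principle or by the algebraic construction of Katz--Oda.) Local freeness of all the $\mathscr H^i$ forces the base-change map for $\mathscr H^0=f_*E^{\nabla_{X/S}}$ to be an isomorphism, giving $N_s\cong H^0_{DR}(X_s/k,E|_{X_s})$ and hence $W_0=F|_{X_s}$.

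Finally I would deduce conditions (a) and (b) of \ref{General Theory} (3). Condition (b) is immediate with $V_0:=F=f^*H^0_{DR}(X/S,(E,\nabla_E))$, since $\phi\colon F\hookrightarrow E$ is a subobject in $\Mod_c(D_{X/k})$ with $F|_{X_s}=W_0$. For (a), the implication $E\cong f^*U\Rightarrow E|_{X_s}$ trivial is the complex statement already proved in \ref{complex}; conversely, if $E|_{X_s}$ is trivial then $W_0=E|_{X_s}$, so $F|_{X_s}=E|_{X_s}$ and the locally free quotient $E/F$ vanishes on the fibre $X_s$. As $X$ is connected and $E/F$ is a vector bundle, its rank is constant, so $E/F=0$; thus $\phi$ is an isomorphism and $E\cong f^*N$ is a pullback from $S$. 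This establishes (a) and (b).
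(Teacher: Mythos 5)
Your proposal is correct and follows essentially the same route as the paper: both identify the maximal trivial subobject of $(E,\nabla_E)|_{X_s/k}$ with $f^*H^0_{DR}(X_s/k,(E,\nabla_E)|_{X_s/k})$ (your description via global horizontal sections, using $H^0(X_s,O_{X_s})=k$, is the same identification) and then conclude by the base change theorem for the Gauss--Manin connection as in \cite[Section 8]{Katz}. The only difference is explicitness: you spell out the easy inclusion $F|_{X_s}\subseteq W_0$, the reduction to surjectivity of the base-change map, and the deduction of conditions (a) and (b), all of which the paper leaves implicit.
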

\begin{proof}Since the maximal trivial subobject of
$(E,\nabla_E)|_{X_s/k}$ is precisely $$f^*H^0_{DR}(X_s/k,
(E,\nabla_E)|_{X_s/k})\hookrightarrow(E,\nabla_E)|_{X_s/k},$$  the theorem follows from the base change theorem for the Gauss-Manin
connection (See, for example,  \cite[Section 8]{Katz}).
\end{proof}

\subsection{The condition (c) for a generic geometric point}
Now we come to check the condition (c) in the general criterion.
Since we are not going to show the injectivity of the very left
arrow, the condition (c) in our situation reads (see \ref{noninj}):

For any  $(E,\nabla_E)\in\Mod_c(D_{X/k})$ and any quotient
$$(E,\nabla_E)|_{X_s/k}\twoheadrightarrow(F',\nabla_{F'})\in
\Mod_c(D_{X_s/k}),$$ there exists $ (F,\nabla_{F})\in \Mod_c(D_{X/k})$ and
an imbedding $$(F',\nabla_{F'})\hookrightarrow
(F,\nabla_{F})|_{X_s/k}\in \Mod_c(D_{X_s/k}).$$ Or equivalently, by taking dual, one
can say that for any $(E,\nabla_E)\in\Mod_c(D_{X/k})$
and any subobject
$(F',\nabla_{F'})\hookrightarrow(E,\nabla_E)|_{X_s/k}\in
\Mod_c(D_{X_s/k}),$ there exists $ (F,\nabla_{F})\in \Mod_c(D_{X/k})$ and
a surjection $(F,\nabla_{F})|_{X_s/k}\twoheadrightarrow
(F',\nabla_{F'})\in \Mod_c(D_{X_s/k}).$

This condition here is quite difficult to check, but since (a) and
(b) are satisfied, (c) is now equivalent to the exactness of the
homotopy sequence. We will first prove this condition in a special
case (for a "generic geometric point") then we will show that if in
this special case the condition holds then it holds  in general. Next we will place the settings for the case of a "generic
geometric point".\begin{setup2} Let $f_0:X_0\to S_0$ be a smooth morphism
between smooth geometrically connected schemes of finite type over a
field $k_0$ of characteristic 0. Let $k$ be an algebraic extension
of the function field $\kappa(S_0)$ of $S_0$, $S:=S_0\times_{k_0}k$, $X:=X_0\times_{k_0}k$,
$f:=f_0\times_{k_0}k$. Then we get a $k$-rational point $s\in S$
which corresponds to the generic point of $S_0$. This point is called the generic geometric point of $S_0$. Let $X_s$ be the
fibre of $f$ at $s\in S(k)$. Assume there is $x\in X(k)$ such that $f(x)=s$.$$\xymatrix{
  \Spec(k) \ar@/_/[ddr]_{} \ar@/^/[drr]^{=}
    \ar[dr]|-{s}                   \\
   & S \ar[d]^{} \ar[r]_{}
                      & \Spec(k) \ar[d]_{}    \\
   & S_0 \ar[r]^{}     & \Spec(k_0)               }\ \ \ \ \xymatrix{
  \Spec(k) \ar@/_/[ddr]_{} \ar@/^/[drr]^{s}
    \ar[dr]|-{x}                   \\
   & X \ar[d]^{} \ar[r]^{f}
                      & S \ar[d]_{}    \\
   & X_0 \ar[r]^{f_0}     & S_0               }
$$\end{setup2}

\begin{prop}\label{lemma 3.4}
If $(E,\nabla_E)\in \Mod_c(D_{X/k})$, $(F',\nabla_{F'})\subseteq
(E,\nabla_E)|_{X_s/k}\in \Mod_c(D_{X_s/k})$, then $\exists$ a
non-trivial Zariski open $U_0\subseteq S_0$ and an object
$(F_0,\nabla_{F_0})\in \Mod_c(D_{f_0^{-1}(U_0)/k_0})$ with a
surjection
$(F_0,\nabla_{F_0})|_{X_s/k}\twoheadrightarrow(F',\nabla_{F'}).$
\end{prop}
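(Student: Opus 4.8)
The plan is to translate the statement into monodromy and reduce it to an induction of representations. First I would choose an embedding $k\hookrightarrow\C$; then $s$ becomes a very general complex point $t_0\in S_0(\C)$, the connection $(E,\nabla_E)$ becomes a finite-dimensional representation $\mathbb E$ of $\Pi:=\pi_1(X_0^{\mathrm{an}})$, and the fibrewise sub-connection $(F',\nabla_{F'})\subseteq(E,\nabla_E)|_{X_s/k}$ becomes a subrepresentation $\mathbb F'\subseteq\mathbb E|_{X_{t_0}}$. As $f_0$ is smooth and proper, Ehresmann's theorem makes $X_0^{\mathrm{an}}\to S_0^{\mathrm{an}}$ a locally trivial fibration, so the topological homotopy sequence $\pi_1(X_{t_0})\to\Pi\to\Gamma:=\pi_1(S_0^{\mathrm{an}})\to 1$ is exact; put $H:=\mathrm{im}(\pi_1(X_{t_0})\to\Pi)=\Ker(\Pi\to\Gamma)$, a normal subgroup. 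Since $\mathbb E|_{X_{t_0}}$ is the restriction of the $\Pi$-module $\mathbb E$, its $\pi_1(X_{t_0})$-action factors through $H$, hence $\mathbb F'$ is in fact an $H$-subrepresentation of $\mathbb E|_H$. In these terms it suffices to produce a finite-dimensional $\Pi$-representation $\sigma$ together with an $H$-equivariant surjection $\sigma|_H\twoheadrightarrow\mathbb F'$, for such a $\sigma$ corresponds, by Riemann--Hilbert, to an algebraic flat connection whose fibre at $s$ surjects onto $F'$.

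The decisive step is to show that the stabiliser $\Pi':=\{g\in\Pi:g\,\mathbb F'=\mathbb F'\}$ has finite index in $\Pi$, and this is exactly where the hypothesis of a generic geometric point is used. Indeed $F'$ is defined over the residue field $k$, which is algebraic over $\kappa(S_0)$, and being of finite presentation it is already defined over a finite subextension $L$. Thus $\mathbb F'$ is an $L$-valued point of the relative, finite-type scheme $Q\to S_0$ parametrising flat sub-connections of $E$, so it has only finitely many conjugates over $\kappa(S_0)$; the orbit of $\mathbb F'$ under $\Gamma$ in the fibre $Q_{t_0}$ is contained in these finitely many conjugate points, and is therefore finite. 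Consequently $H\subseteq\Pi'$ and $[\Pi:\Pi']<\infty$. For an arbitrary sub-connection of a single fibre this finiteness is false; it is forced here only by the sub-connection living over the generic point.

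With $\Pi'$ of finite index I would form $\sigma:=\mathrm{Ind}_{\Pi'}^{\Pi}\mathbb F'$, a finite-dimensional $\Pi$-representation ($\Pi'$ acts on $\mathbb F'$ because it stabilises it). Mackey's formula, using $H\trianglelefteq\Pi$ and $H\subseteq\Pi'$, gives $\sigma|_H\cong\bigoplus_{g}{}^{g}\mathbb F'$ over coset representatives $g$ of $\Pi/\Pi'$, so the coset of the identity exhibits $\mathbb F'$ as a direct summand of $\sigma|_H$; projecting onto it produces the required surjection $\sigma|_H\twoheadrightarrow\mathbb F'$. Geometrically $\sigma$ is the direct image of $\mathbb F'$ along the finite \'etale cover $X'\to X_0$ attached to $\Pi'\le\Pi$, and because $H\subseteq\Pi'$ this cover is pulled back from a finite cover $V_0\to S_0$; thus $X'=X_0\times_{S_0}V_0$ and $\mathbb F'$, a priori only fibrewise flat, extends to a genuine flat connection on $X'$, whose direct image along $X_0\times_{S_0}V_0\to f_0^{-1}(U_0)$ is the sought connection.

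It remains to descend from $\C$ back to $k_0$ and to fix the open set. The cover $V_0\to S_0$ and the sub-connection $\mathbb F'\subseteq E$ are cut out by finitely many algebraic equations over $k_0$, so after shrinking to a nonempty open $U_0\subseteq S_0$ the finite \'etale cover, the flat connection on $X_0\times_{S_0}V_0$ extending $F'$, and its direct image all descend to $k_0$; a standard spreading-out argument then yields $(F_0,\nabla_{F_0})\in\Mod_c(D_{f_0^{-1}(U_0)/k_0})$ with $(F_0,\nabla_{F_0})|_{X_s/k}\twoheadrightarrow(F',\nabla_{F'})$. I expect the finiteness of the monodromy orbit in the second paragraph to be the main obstacle: the soft operation of saturating $F'$ to an absolute sub-connection only yields an \emph{embedding} $F'\hookrightarrow\tilde{\mathcal F}|_{X_s}$, and turning this into a \emph{surjection} onto $F'$ genuinely requires the finite-index stabiliser — equivalently, that $F'$ becomes an honest absolute connection after the finite base change $V_0\to S_0$ — which in turn rests on $F'$ being defined over a finite extension of the function field $\kappa(S_0)$.
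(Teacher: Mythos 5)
The decisive step of your argument --- that the stabiliser $\Pi'$ of $\mathbb F'$ has finite index in $\Pi$ --- is false, and the reason you give for it conflates two different transports. Test it on $(E,\nabla_E)=f^*(E_S,\nabla_{E_S})$, the pullback of a connection on the base whose monodromy $\rho_{E_S}(\Gamma)$ is Zariski dense in $\mathrm{GL}(V)$: the restriction of $E$ to the fibre $X_s$ is the trivial connection $V\otimes_k O_{X_s}$, every subspace $W\subseteq V$ defines a flat sub-connection $F'=W\otimes O_{X_s}$, the subgroup $H$ acts trivially on $V$, and $\Gamma$ acts on the set of $H$-subrepresentations through $\rho_{E_S}(\Gamma)$ on the Grassmannian --- so the orbit of a general $W$ is infinite, and $\mathrm{Ind}_{\Pi'}^{\Pi}\mathbb F'$ is infinite-dimensional. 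Yet this $F'$ is certainly defined over a finite subextension $L$ of $k/\kappa(S_0)$; indeed in the Setup for \S 3.2 \emph{every} sub-connection is so defined, since $k$ is algebraic over $\kappa(S_0)$ and $F'$ is of finite presentation, so your criterion would force finiteness of the orbit always, which is absurd. The flaw in the inclusion ``monodromy orbit $\subseteq$ Galois conjugates'' is that the monodromy transport of the point $\mathbb F'\in Q_{t_0}$ is analytic continuation along the \emph{horizontal} leaf of the flat structure on $Q/S_0$, which is in general not algebraic, while the finitely many Galois conjugates spread out to an algebraic multisection of $Q\to S_0$; nothing forces the horizontal leaf through $\mathbb F'$ to stay on that multisection (it does precisely when $F'$ extends to a relative flat subfamily over a finite cover of the base --- which is essentially what you would have to prove, not assume). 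Note also that the truth of the Proposition does not rescue the step: condition (c) only asks for the abstract $H$-module $\mathbb F'$ as a quotient of a restriction, not for finiteness of the stabiliser of $\mathbb F'$ as a subspace of $\mathbb E$, and in the example above the required surjection exists trivially ($F'$ is a trivial connection) even though $[\Pi:\Pi']=\infty$.

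There are secondary problems with the translation itself: in the Setup for \S 3.2 the morphism $f_0$ is only assumed smooth, not proper, so Ehresmann's theorem, the exactness of the topological homotopy sequence, and the finite-typeness of your $Q$ are all unavailable; and for non-proper $X_0$ the category $\Mod_c(D_{X/k})$ contains irregular connections, for which the Riemann--Hilbert dictionary with $\Rep_{\C}(\pi_1(X_0^{\mathrm{an}}))$ fails (e.g.\ $(O,d+dz)$ on $\A^1$ has trivial monodromy but is a nontrivial object), so both the passage from $(E,\nabla_E)$ to $\mathbb E$ and the return from $\sigma$ to an algebraic connection with the right fibre are illegitimate in the stated generality. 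The paper's proof is purely algebraic and organised differently: by \ref{lemma 3.5} it spreads out $(E,\nabla_E)$ \emph{itself} (not $F'$) over a finite \'etale cover $T_0\to U_0$ determined by the field of definition of $E$ --- where finiteness is automatic --- then uses the graph of $T_0\to S_0$ to convert the relative connection on $Z_{T_0}$ into an absolute one over $k_0$, obtains the needed surjection from the adjunction $\lambda^*\lambda_*\twoheadrightarrow \mathrm{id}$ for the finite \'etale $\lambda: Z_{T_0}\to X_0$, and finally invokes \ref{lemma 3.6}, whose engine is Esnault--Hai \cite[Theorem 5.10]{EP} for the determinant line together with the twist by $(\wedge^{r-1}(E,\nabla_E))^{\vee}$. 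Your finite-\'etale-pushforward idea genuinely appears there, but attached to the field of definition of $E$, never to a stabiliser of $F'$ inside the monodromy, which is exactly the point where your proposal breaks.
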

\begin{proof}
According to \ref{lemma 3.5} below, we have a non-trivial Zariski open
$U_0\subseteq S_0$ and a finite \'etale covering $T_0\to U_0$ with
$\kappa(T_0)\subseteq k$ such that $(E,\nabla_E)\in \Mod_c(D_{X/k})$
is defined over $\Mod_c(D_{X_0\times_{k_0}T_0/T_0})$. We may assume
$U_0=S_0$ and let $(E_0, \nabla_{E_0})\in
\Mod_c(D_{X_0\times_{k_0}T_0/T_0})$ be the object such that
$\rho^*(E_0, \nabla_{E_0})\cong (E,\nabla_E)$ where $\rho:
X=X_0\times_{k_0}k\to X_0\times_{k_0}T_0$. Let $\alpha:
T_0\hookrightarrow S_0\times_{k_0}T_0$ be the graph of $T_0\to S_0$
and $\beta: Z_{T_0}\hookrightarrow X_0\times_{k_0}T_0$ be the
pull back of the graph:
$$\xymatrix{Z_{T_0}\ar[r]^-{\beta}\ar[d]&X_0\times_{k_0}T_0\ar[d]\\T_0\ar[r]^-{\alpha}&S_0\times_{k_0}T_0}.$$
Then the pull back $\beta^*(E_0,\nabla_{E_0})\in
\Mod_c(D_{Z_{T_0}/T_0})$ is actually defined over
$\Mod_c(D_{Z_{T_0}/k_0})$. In fact, we have the following
commutative diagram:
$$\xymatrix{Z_{T_0}\ar[r]^-{\beta}\ar[d]&X_0\times_{k_0}T_0\ar[r]^-{p}\ar[d]&X_0\ar[d]\\T_0\ar@{=}[r]&T_0\ar[r]&k_0}.$$
Thus we have maps
$$\beta^*\Omega^1_{X_0\times_{k_0}T_0/T_0}\cong\beta^*p^*\Omega^1_{X_0/k_0}\to\Omega^1_{Z_{T_0}/k_0}.$$
Note that the last arrow in the above sequence is actually coming
from the following commutative
diagramme:$$\xymatrix{Z_{T_0}\ar[r]^-{p\circ\beta}\ar[d]&X_0\ar[d]\\\Spec(k_0)\ar@{=}[r]&\Spec(k_0)}.$$
This indeed extends our connection$$\nabla_{E_0}: E_0\to
E_0\otimes_{O_{X_0\times_{S_0}T_0}}\Omega^1_{X_0\times_{k_0}T_0/T_0}\cong
E_0\otimes_{O_{X_0\times_{S_0}T_0}}p^*\Omega^1_{X_0/k_0}$$ to the
connection $$\beta^*\nabla_{E_0}: \beta^*E_0\to
\beta^*E_0\otimes_{O_{Z_{T_0}}}\Omega^1_{Z_{T_0}/k_0}.$$ Let
$\lambda: Z_{T_0}\to {Z}_{S_0}\cong X_0$. Since $T_0\to S_0$
is finite \'etale, we have $\lambda_*\beta^*(E_0,\nabla_{E_0})\in
\Mod_c(D_{X_0/k_0})$, and there is a surjection
$$\lambda^*\lambda_*\beta^*(E_0,\nabla_{E_0})\twoheadrightarrow\beta^*(E_0,\nabla_{E_0}).$$
From the Cartesian diagrams
$$\xymatrix{X_s\ar[r]^-{\iota}\ar[d]&Z_{T_0}\ar[r]^-{\beta}\ar[d]&X_0\times_{k_0}T_0\ar[r]\ar[d]&X_0\ar[d]\\k\ar[r]&T_0\ar[r]&S_0\times_{k_0}T_0\ar[r]&S_0}$$
we know that if we pull back $\beta^*(E_0,\nabla_{E_0})$ along
$\iota$ then we get $(E,\nabla_E)|_{X_s/k}$. Now let
$(F'',\nabla_{F''})$ be the inverse image of $(F',\nabla_{F'})$
under the map
$$\lambda^*\lambda_*\beta^*(E_0,\nabla_{E_0})|_{X_s/k}\twoheadrightarrow\beta^*(E_0,\nabla_{E_0})|_{X_s/k}=(E,\nabla_E)|_{X_s/k}.$$
According to \ref{lemma 3.6} below, there exists a non-trivial Zariski
open $U_0\subseteq S_0$ and $(F_0,\nabla_{F_0})\in
\Mod_c(D_{f_0^{-1}(U_0)/k_0})$ with a surjection
$(F_0,\nabla_{F_0})|_{X_s/k}\twoheadrightarrow
(F'',\nabla_{F''})\twoheadrightarrow (F',\nabla_{F'})\in
\Mod_c(D_{X_s/k})$. This completes the proof.
\end{proof}

\begin{lem}\label{lemma 3.5}
{\rm(The notations and conventions in this lemma are independent)}
Let $f: X\to S$ be a smooth morphism between two integral noetherian
schemes. Let $s\in S$ be the generic point, $\kappa(s)\subseteq k$
be a separable algebraic extension of fields, $X_k$ be the generic
fibre (corresponding to $\Spec(k)\hookrightarrow S$). Then for any
object $(F,\nabla_F)\in\Mod_c(D_{X_k/k})$ with $F$ a vector bundle,
there exists a non-empty open subset $U\subseteq S$, an integral
finite \'etale covering $T\to U$ and an object
$(E,\nabla_E)\in\Mod_c(D_{X\times_ST/T})$ which satisfy (1) the
function field of $T$ is contained in $k$; (2)
$(F,\nabla_F)\cong(E,\nabla_E)_{X_k/k}$.
\end{lem}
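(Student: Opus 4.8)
The plan is to prove this by a spreading-out (limit) argument: I will realize the generic fibre $X_k$ as a cofiltered inverse limit, with affine transition maps, of schemes of the form $X\times_S T$ with $T\to U$ finite \'etale, and then invoke Grothendieck's limit theorems (EGA~IV, \S 8) to descend the vector bundle with flat connection $(F,\nabla_F)$ to a finite stage of the limit. The whole point of the separability hypothesis is that finite subextensions of $k/\kappa(s)$ spread out to \emph{\'etale} covers.

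First I would absorb the field extension at the level of the generic fibre. Since $k$ is a separable algebraic extension of $\kappa(s)$, write $k=\varinjlim_i k_i$ as the filtered union of its finite subextensions $k_i/\kappa(s)$, each finite separable. Set $X_{\kappa(s)}=X\times_S\Spec\kappa(s)$ (the generic fibre of $f$, of finite type over $\kappa(s)$) and $X_{k_i}=X_{\kappa(s)}\times_{\kappa(s)}k_i$; the transition morphisms $X_{k_j}\to X_{k_i}$ are finite, hence affine, and $X_k=\varprojlim_i X_{k_i}$. Because $\Omega^1_{X_k/k}=\Omega^1_{X_{k_i}/k_i}\otimes_{k_i}k$ compatibly in the system, the connection $\nabla_F$ is a morphism of finitely presented modules over the limit. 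The limit theorems then produce a finite level $i$, a coherent module $F_i$ on $X_{k_i}$ and a $k_i$-linear connection $\nabla_{F_i}\colon F_i\to F_i\otimes\Omega^1_{X_{k_i}/k_i}$ whose base change along $k_i\to k$ recovers $(F,\nabla_F)$. Local freeness of $F_i$ descends by faithfully flat descent along $X_k\to X_{k_i}$, and integrability descends because the curvature $R_{\nabla_{F_i}}$ is a map of coherent sheaves whose pullback to $X_k$ is $R_{\nabla_F}=0$ and $X_k\to X_{k_i}$ is faithfully flat; hence $(F_i,\nabla_{F_i})\in\Mod_c(D_{X_{k_i}/k_i})$.

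Next I would spread the finite separable extension $k_i/\kappa(s)$ out to a finite \'etale cover. Writing $k_i=\kappa(s)[\theta]/(g)$ with $g$ separable, one clears denominators and inverts the discriminant to obtain an affine open $U\subseteq S$ and an integral finite \'etale cover $T=\Spec\big(\mathcal{O}_U[\theta]/(g)\big)\to U$ with $\kappa(T)=k_i\subseteq k$, which secures condition (1). The fibre of $X\times_S T\to T$ over the generic point $\Spec k_i$ of $T$ is exactly $X_{k_i}$, and $X_{k_i}=\varprojlim_h (X\times_S T)|_{D(h)}$ as $D(h)$ ranges over the principal opens of $T$ containing the generic point; these transition maps are localizations, hence affine. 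Applying the limit theorems once more, the object $(F_i,\nabla_{F_i})$, viewed over the base $\Spec k_i=\varprojlim_h D(h)$, descends to a coherent module with a $D(h)$-relative integrable connection on $(X\times_S T)|_{D(h)}$ for some $h$. Since a finite morphism is closed, shrinking $U$ to an open $U'$ over which $T':=T\times_U U'\subseteq D(h)$ keeps $T'\to U'$ finite \'etale with $\kappa(T')=k_i$, and after a further shrink I may assume the descended module is a vector bundle. Renaming $(U',T')$ as $(U,T)$ and the descended datum as $(E,\nabla_E)\in\Mod_c(D_{X\times_S T/T})$ gives an object restricting to $(F,\nabla_F)$ on $X_k$, which is condition (2).

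The main obstacle is the limit bookkeeping rather than any single hard estimate. One must keep all transition morphisms in the two inverse systems affine so that EGA~IV, \S 8 applies, which is exactly why I separate the ``enlarge the \'etale cover'' direction (finite, hence affine, transitions) from the ``shrink the base'' direction (localizations $D(h)$, hence affine), instead of trying to take a single limit in which shrinking $U$ would introduce non-affine open immersions. The other delicate point is that the extra structure carried by an object of $\Mod_c(D)$, namely the relative connection together with its flatness, really is finitely presented data compatible with the transition maps: the key compatibility is that $\Omega^1_{\bullet/\mathrm{base}}$ is stable under the relevant base changes (field extension in the first step, \'etale localization in the second), so that a connection and its curvature descend as morphisms of finitely presented sheaves, while local freeness and the vanishing of the curvature are then controlled by faithfully flat descent.
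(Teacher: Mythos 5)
Your proposal is correct and is essentially the paper's own argument: the paper likewise spreads $(F,\nabla_F)$ out to a model over a subring $R_1=R_f[x_0,\cdots,x_n]\subseteq k$ finite over a localization of $R$ (playing the role of your finite level $k_i$ spread over $D(h)$), and likewise uses separability to replace the resulting finite cover by a finite \'etale one after shrinking the base --- in the paper via generic \'etaleness of $T_2\to S$, in yours via inverting the discriminant of a primitive element. The only cosmetic difference is that you invoke the EGA~IV, \S 8 limit theorems (twice) where the paper carries out the same bookkeeping by hand, collecting finitely many integral elements and denominators for the sheaf, the connection, and its flatness.
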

\begin{proof}
Let $\phi: X_k\to X$ be the canonical imbedding of the generic fibre
and assume $S=\Spec(R)$. Then we get a surjection
$\phi^*\phi_*F\twoheadrightarrow F$. Since $\phi_*F$ is the union of
its coherent subsheaves, we find a coherent subsheaf $M$ of
$\phi_*F$ with a surjection $\phi^*M\twoheadrightarrow F$. Suppose
$N\subseteq \phi^*M$ is the kernel of $\phi^*M\twoheadrightarrow F$.
It is coherent because $X$ is noetherian. Then we can collect finitely
many elements $\{x_0,\cdots,x_n\}$ in $k$ which are integral over
$R$ and a non-zero element $f\in R$ such that $N$ is defined over
$R_1:=R_f[x_0,\cdots,x_n]$. Thus $F$ is defined over $R_1$. Suppose that $E_1$ is a coherent sheaf on $X\times_RR_1$ such that
$\rho_1^*E_1\cong F$, where $\rho_1: X_k=X\times_Rk\to
X\times_RR_1$. Since the problem is local for $S$, and $F$ is
locally free, we may assume $E_1$ is locally free too. Then the map
$$E_1\otimes_{O_{X\times_RR_1}}\Omega^1_{X\times_RR_1/{R_1}}\to {\rho_1}_*\rho_1^*(E_1\otimes_{O_{X\times_RR_1}}\Omega^1_{X\times_RR_1/{R_1}})$$ is
injective. Since the $k-$linear map
$$\nabla_F: F\to F\otimes_{O_{X_k}}\Omega^1_{X_k/k}$$ can be seen as
a map
$$\rho_1^*E_1\to
\rho_1^*(E_1\otimes_{O_{X\times_RR_1}}\Omega^1_{X\times_RR_1/{R_1}}),$$
we can collect finite many elements $\{y_0,\cdots,y_n\}$ in $k$
which are integral over $R$ and a non-zero element $g\in R$ such
that $\nabla_F$ is defined over $R_2=(R_1)_g[y_0,\cdots,y_n]$ as
 a flat connection. Thus we have found $T_2:=\Spec R_2$ and
$(E_2,\nabla_{E_2})\in \Mod_c(D_{X\times_ST_2/T_2})$ such that
$\rho_2^*(E_2,\nabla_{E_2})\cong (F,\nabla_F)$ (where $\rho_2:
X\times_Sk\to X\times_ST_2$) and the generic point of $T_2$ is a
finite field extension of $\kappa(s)$. Now the map $T_2\to S$ which
is finite onto its image is \'etale at the generic point of $T_2$,
thus we get a non-empty open subset $T$ of $T_2$ such that $T$ is
finite \'etale over some non-empty open $U$ of $S$. This is
precisely what we want.
\end{proof}

\begin{lem}\label{lemma 3.6} For any object $(E,\nabla_E)\in\Mod_c(D_{X_0/k_0})$ and any
imbedding $(F', \nabla_{F'})\hookrightarrow(E,\nabla_E)|_{X_s/k}\in
\Mod_c(D_{X_s/k})$ there is a non-empty open $U_0\subseteq S_0$ and
an object $(F,\nabla_F)\in \Mod_c(D_{f^{-1}(U_0)/k_0})$ which admits
a surjection $$(F,\nabla_F)|_{X_s/k}\twoheadrightarrow(F',
\nabla_{F'})\hookrightarrow(E,\nabla_E)|_{X_s/k}\in
\Mod_c(D_{X_s/k}).$$
\end{lem}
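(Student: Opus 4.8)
The plan is to realize $(F',\nabla_{F'})$ after pulling back along a finite \'etale cover of an open part of $X_0$ which ``trivializes'' the algebraic extension $\kappa(T_0)\subseteq k$ over which $F'$ is defined, and then to descend back to $X_0$ by a finite pushforward. The surjection (as opposed to a mere inclusion) should come from the fact that such a cover splits completely over the generic geometric point $s$, so that the restriction of the pushforward decomposes into a sum of Galois conjugates, of which $F'$ is the distinguished summand.

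Concretely, I would first apply \ref{lemma 3.5} to the inclusion $(F',\nabla_{F'})\hookrightarrow (E,\nabla_E)|_{X_s/k}$. Since $F'$ and this inclusion are defined over $k$, hence already over a finite subextension $\kappa(T_0)$ of $\kappa(S_0)$, there is a non-empty open $U_0\subseteq S_0$ and a finite \'etale integral covering $T_0\to U_0$ with $\kappa(T_0)\subseteq k$ spreading the data out. Writing $Z_{T_0}:=X_0\times_{S_0}T_0$ and $\lambda\colon Z_{T_0}\to X_0$ for the (finite \'etale) first projection, the ambient object pulls back to $\lambda^*(E,\nabla_E)\in\Mod_c(D_{Z_{T_0}/k_0})$ — still over $k_0$, because $\lambda$ is \'etale — and $F'$ spreads to a subobject $(\hat F,\nabla_{\hat F})\hookrightarrow\lambda^*(E,\nabla_E)$ on $Z_{T_0}$ restricting to $(F',\nabla_{F'})$ along the distinguished section determined by $\kappa(T_0)\subseteq k$. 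I would then set $(F,\nabla_F):=\lambda_*(\hat F,\nabla_{\hat F})\in\Mod_c(D_{f_0^{-1}(U_0)/k_0})$.

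To read off the surjection, note that base-changing $\lambda$ along $X_s\hookrightarrow X_0$ splits the cover: $(Z_{T_0})_s=X_s\times_{S_0}T_0\cong\bigsqcup_{\tau}X_s$, one copy for each $\kappa(S_0)$-embedding $\tau\colon\kappa(T_0)\hookrightarrow k$, so proper base change gives $(F,\nabla_F)|_{X_s/k}\cong\bigoplus_{\tau}{}^{\tau}\!\bigl(\hat F|_{X_s}\bigr)$. Projecting onto the summand indexed by the distinguished embedding then yields the desired surjection $(F,\nabla_F)|_{X_s/k}\twoheadrightarrow(F',\nabla_{F'})$; this splitting is the reason the construction lands on $F'$ itself rather than on the (generally larger) subobject generated by $F'$.

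The hard part, and the real content of the lemma, will be making $\hat F$ — equivalently $F$ — genuinely flat over $k_0$ rather than merely relative to $T_0$. The subbundle produced by \ref{lemma 3.5} is a priori horizontal only in the fibre directions $\Omega^1_{Z_{T_0}/T_0}$, and its stability under the full connection $\lambda^*\nabla_E$ amounts to the vanishing of a base-direction obstruction (a second fundamental form / Gauss--Manin class) in $\Hom_{O_{Z_{T_0}}}(\hat F,\lambda^*E/\hat F)\otimes q^*\Omega^1_{T_0/k_0}$, where $q\colon Z_{T_0}\to T_0$. Controlling this obstruction is where both hypotheses must be spent: that $(E,\nabla_E)$ is defined over $k_0$ is what places $F'$ in the $k_0$-rational part of the Tannakian category generated on the fibre and supplies the base-direction flatness, while the algebraicity of $k$ over $\kappa(S_0)$ is what keeps $T_0\to U_0$ finite, so that $\lambda_*$ preserves coherence and $(F,\nabla_F)$ stays finite-dimensional. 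I expect this descent to $k_0$ to be the main obstacle; by comparison the spreading-out and the splitting of the cover over $s$ are formal.
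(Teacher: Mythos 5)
There is a genuine gap, and it sits exactly where you placed your bet: the ``descent to $k_0$'' step that you defer is the entire content of the lemma, and the mechanism you propose for it --- showing that the spread-out subbundle $\hat F$ is stable under the full connection $\lambda^*\nabla_E$ by proving the vanishing of the second fundamental form in the base directions --- fails. That obstruction does \emph{not} vanish in general, and no shrinking of $U_0$ or enlarging of the \'etale cover helps. A minimal example: take $X_0=S_0\times_{k_0}Y$ with $E=O_{X_0}^{\oplus 2}$ and the trivial connection, $k=\kappa(S_0)=k_0(t)$; on the generic fibre the line $F'$ spanned by the section $(1,t)$ is a perfectly good subobject of $(E,\nabla_E)|_{X_s/k}$ in $\Mod_c(D_{X_s/k})$, but $\nabla_{\partial/\partial t}(1,t)=(0,1)\notin F'$, so $F'$ extends to no $k_0$-flat subobject over any open part of $S_0$. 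This is precisely why the lemma asserts only a \emph{surjection} from a $k_0$-flat object onto $F'$, not an extension of $F'$ itself; your hope that ``$(E,\nabla_E)$ being defined over $k_0$ supplies the base-direction flatness'' is refuted by this example.

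The paper resolves the crux by two ideas absent from your proposal. First, a rank-one reduction: writing $r=\dim F'$ and $F'\cong \det(F')\otimes(\wedge^{r-1}F')^{\vee}$, it suffices to treat the line $\det(F')$ inside the appropriate exterior-power bundle, since one can then twist by $(\wedge^{r-1}(E,\nabla_E)|_{X_s/k_0})^{\vee}$ and use the surjection $(\wedge^{r-1}E|_{X_s})^{\vee}\twoheadrightarrow(\wedge^{r-1}F')^{\vee}$. Second, for the rank-one case it invokes \cite[Theorem 5.10]{EP}, which produces a subobject $(M,\nabla_M)$ flat relative to $k_0$ (morally, the $D_{X_s/k_0}$-submodule generated by $\det F'$ --- in general strictly larger than $\det F'$, consistent with the counterexample above) together with a surjection $(M,\nabla_M)|_{X_s/k}\twoheadrightarrow\det(F',\nabla_{F'})$; the result is then spread out over a non-empty open $U_0\subseteq S_0$ via \ref{restriction lemma}-type arguments. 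Your outer reduction is, by contrast, sound and close to the paper's: the paper also reduces the algebraic extension $\kappa(S_0)\subseteq k$ to a finite one and descends by a finite \'etale pushforward, though it gets the surjection onto $F'$ from the adjunction counit $\alpha^*\alpha_*(F',\nabla_{F'})\twoheadrightarrow(F',\nabla_{F'})$, which needs no complete splitting of the cover over $s$ (your splitting claim requires $k$ to contain all the conjugates of $\kappa(T_0)$, an avoidable enlargement). But without the determinant twist and the Esnault--Hai theorem (or a proof of it), the plan does not close.
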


\begin{proof}
First suppose $\kappa(S_0)\subseteq k$ is a trivial extension. Let
$r:= \dim_{O_{X_s}}(F')$. According to \cite[Theorem 5.10]{EP} we
have a subobject $(M,\nabla_M)\subseteq
(E,\nabla_E)|_{X_s/k_0}\in\Mod_c(D_{X_s/k_0})$ with a surjection
$(M,\nabla_M)|_{X_s/k}\twoheadrightarrow \det(F',\nabla_{F'})$. If
we set
$(F_1,\nabla_{F_1}):=(M,\nabla_M)\otimes_{O_{X_s}}(\wedge^{r-1}(E,\nabla_E)|_{X_s/k_0})^{\vee}$,
then it is a subobject
$$(F_1,\nabla_{F_1})\subseteq
(E,\nabla_E)|_{X_s/k_0}\otimes_{O_{X_s}}(\wedge^{r-1}(E,\nabla_E)|_{X_s/k_0})^{\vee}\in
\Mod_c(D_{X_s/k_0})$$ with a surjection
$$(F_1,\nabla_{F_1})|_{X_s/k}\twoheadrightarrow(F',\nabla_{F'})\cong \det(F',\nabla_{F'})\otimes_{O_{X_s}}(\wedge^{r-1}(F',\nabla_{F'}))^{\vee}.$$
Let $u: X_s\to X_0$ be the canonical imbedding,  then we take the
inverse image of $u_*F_1$ under the canonical map
$$E\otimes_{O_X}(\wedge^{r-1}E)^{\vee}\to u_*u^*(E\otimes_{O_X}(\wedge^{r-1}E)^{\vee})$$ and denote it by $F_2$. One can check there is a non-empty open
subscheme $U_0\subseteq S_0$ so that $F_2$ is equipped with a flat
connection on $f^{-1}(U_0)/k_0$ and becomes a subobject
$$(F_2,\nabla_{F_2})\subseteq
((E,\nabla_E)\otimes_{O_{X_0}}(\wedge^{r-1}(E,\nabla_E))^{\vee})|_{f^{-1}(U_0)/k_0}\in
\Mod_c(D_{f^{-1}(U_0)/k_0})$$ which satisfies
$(F_2,\nabla_{F_2})|_{X_s/k_0}\cong(F_1,\nabla_{F_1})$. This
finishes the special case.

Now suppose $\kappa(S_0)\subseteq k$ is a non-trivial extension. It
is clear that the map
$(F',\nabla_{F'})\hookrightarrow(E,\nabla_E)|_{X_s/k}$ is defined
over $\Mod_c(D_{X_{k'}/k'})$ where $k'$ is a finite extension of
$\kappa(S_0)$ and $X_{k'}:=X_0\times_{S_0}k'$. Thus we may assume
$k/\kappa(S_0)$ is finite. Then the map $\alpha:X_s\to
X_0\times_{S_0}\kappa(S_0)$ is finite \'etale. So we get a
surjection
$$\alpha^*\alpha_*(F',\nabla_{F'})\twoheadrightarrow
(F',\nabla_{F'})\in \Mod_c(D_{X_{s}/k})$$ and an imbedding
$$\alpha_*(F',\nabla_{F'})\hookrightarrow \alpha_*((E,\nabla_{E})|_{X_s/k})\in \Mod_c(D_{X_0\times_{S_0}\kappa(S_0)/\kappa(S_0)}).$$ Thus
it is enough to show that $\alpha_*((E,\nabla_{E})|_{X_s/k})$ is
defined in $\Mod_c(D_{f^{-1}(U_0)/k_0})$ with $U_0\subseteq S_0$
non-trivial Zariski open, since then we can apply the special case
we discussed above to get a surjection on $\alpha_*(F',\nabla_{F'})$
from some object in $\Mod_c(D_{f^{-1}(U_0)/k_0})$. Since the problem
is local on $S_0$ we may assume $S_0=\Spec(R)$. Then one can find a
finite ring extension $R\subseteq R'\subseteq k$ such that $R'$ has
quotient field $k$ (e.g., the integral closure of $R$ in $k$). Again
because our problem is local on $S_0$, one may assume $R'/R$ is
finite \'etale. Let $\beta: X_0':=X_0\times_{\Spec(R)}\Spec(R')\to
X_0$, $u: X_0\times_{S_0}\kappa(S_0)\to X_0$. Then
$u^*\beta_*\beta^*(E,\nabla_{E})\cong
\alpha_*((E,\nabla_{E})|_{X_s/k})$, but
$\beta_*\beta^*(E,\nabla_{E})\in \Mod_c(D_{X_0/k_0})$. This
completes the proof.
\end{proof}

\begin{defn} Let $\Mod_c(D_{S/k},s)$ be the category whose
objects are of the form $(U,M)$, where $U$ is an open subset of $S$
containing $s$ and $M$ is a coherent sheaf on $U$ with a flat
connection $\nabla_M$ on $U/k$, whose morphisms between two objects
$(U,M)$ and $(U',M')$ are defined by
$$\Mor((U,M),(U',M')):= \Hom_{U\cap U'}((U,M)|_{U\cap
U'},(U',M')|_{U\cap U'}).$$ Let $\Mod_c(D_{X/S/k},f,s)$ be the
category whose objects are of the form $(U,M)$, where $U$ is an open
subset of $S$ containing $s$ and $M$ is a coherent sheaf on
$f^{-1}(U)$ with a flat connection $\nabla_M$ on $f^{-1}(U)/k$,
whose morphisms between two objects $(U,M)$ and $(U',M')$ are
defined by
$$\Mor((U,M),(U',M')):= \Hom_{f^{-1}(U\cap U')}((U,M)|_{f^{-1}(U\cap U')},(U',M')|_{f^{-1}(U\cap U')}).$$\end{defn}

\begin{prop}\label{extension prop}
Let $X$ be a smooth geometrically connected scheme of finite type
over a field $k$ of characteristic 0, $U\subseteq X$ be a dense open
subscheme, then for any two objects $(E,\nabla_E), (F,\nabla_F)\in
\Mod_c(D_{X/k})$ and any morphism $f_U: (E,\nabla_E)|_{U/S}\to
(F,\nabla_F)|_{U/k}\in \Mod_c(D_{U/k})$, we can uniquely extend
$f_U$ to a morphism
$$f: (E,\nabla_E)\to (F,\nabla_F)\in \Mod_c(D_{X/k}).$$
\end{prop}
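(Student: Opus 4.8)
\emph{Proof proposal.} The plan is to reduce the statement to an extension property for a single horizontal section and then to use the connection to rule out poles along the boundary. Concretely, let $(G,\nabla_G):=(E^\vee\otimes_{O_X}F,\nabla_G)$ be the internal Hom of $(E,\nabla_E)$ and $(F,\nabla_F)$ in $\Mod_c(D_{X/k})$; this is again a vector bundle with flat connection on $X$, and a horizontal morphism $(E,\nabla_E)\to(F,\nabla_F)$ is the same datum as a global horizontal section of $(G,\nabla_G)$, i.e.\ an $s\in\Gamma(X,G)$ with $\nabla_G(s)=0$. Under this dictionary $f_U$ becomes a section $s_U\in\Gamma(U,G)$ with $\nabla_G(s_U)=0$, and the task is to extend $s_U$ uniquely to a horizontal $s\in\Gamma(X,G)$.

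Uniqueness will be immediate: since $X$ is smooth and geometrically connected, hence integral, and $U\subseteq X$ is dense, any section of the locally free (in particular torsion-free) sheaf $G$ that vanishes on $U$ is zero, so two extensions of $s_U$ coincide. For existence I would first note that $s_U$ determines a rational section of $G$ on $X$, and that, $G$ being locally free on the normal scheme $X$, a rational section lies in $\Gamma(X,G)$ if and only if it lies in the stalk $G_\eta$ for every codimension-one point $\eta\in X^{(1)}$. Hence it suffices to show that $s_U$ has no pole at the generic point $\eta$ of each component of the boundary $D:=X\setminus U$; at codimension-one points of $U$ there is nothing to check.

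The heart of the argument is the local computation at such an $\eta$. Here $O_{X,\eta}$ is a discrete valuation ring with uniformizer $t$, and by generic smoothness in characteristic $0$ the divisor $D$ is smooth at $\eta$, so $\overline{dt}\neq 0$ in the cotangent space $\Omega^1_{X/k}\otimes\kappa(\eta)$. Write $s_U=t^{-n}v$ with $v\in G_\eta$, $v\notin tG_\eta$, where $n\geq 0$ is the pole order; I must show $n=0$. Applying the Leibniz rule to $\nabla_G(s_U)=0$ and clearing denominators gives
\[
t\,\nabla_G(v)=n\,v\otimes dt
\]
inside $G_\eta\otimes\Omega^1_{X/k}$. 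Reducing modulo $\mathfrak m_\eta=(t)$ kills the left-hand side and leaves $n\,\bar v\otimes\overline{dt}=0$ in $(G_\eta/tG_\eta)\otimes_{\kappa(\eta)}\bigl(\Omega^1_{X/k}\otimes\kappa(\eta)\bigr)$. Since $\overline{dt}\neq 0$ and $n$ is invertible in characteristic $0$, this forces $\bar v=0$, i.e.\ $v\in tG_\eta$, contradicting the choice of $v$ unless $n=0$. Thus $s_U$ is regular at every codimension-one point, and the characterization above produces $s\in\Gamma(X,G)$ extending $s_U$.

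It then remains to verify horizontality: $\nabla_G(s)\in\Gamma(X,G\otimes\Omega^1_{X/k})$ restricts to $\nabla_G(s_U)=0$ on the dense open $U$, and $G\otimes\Omega^1_{X/k}$ is torsion-free on the integral scheme $X$, so $\nabla_G(s)=0$; the associated $s$ gives the unique extension $f$. The main obstacle is precisely the codimension-one estimate, and it is exactly there that characteristic $0$ is indispensable: one divides by the pole order $n$ and invokes generic smoothness to guarantee $\overline{dt}\neq 0$, both of which may fail in characteristic $p$.
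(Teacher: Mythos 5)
Your proof is correct, but it takes a genuinely different route from the paper. You linearize the problem via the internal Hom $(G,\nabla_G)=(E^\vee\otimes F,\nabla_G)$, reduce extension of a horizontal morphism to extension of a horizontal section, use normality of $X$ and local freeness of $G$ to reduce to regularity at codimension-one points, and then kill poles by a direct Leibniz computation: from $\nabla_G(t^{-n}v)=0$ you get $t\nabla_G(v)=n\,v\otimes dt$, and reduction mod $t$ plus $\overline{dt}\neq 0$ (generic smoothness of the boundary divisor, char $0$) and invertibility of $n$ force $n=0$. The paper instead first uses its restriction lemma (extending subobjects across $U\subseteq X$, applied to the graph) to reduce to the case where $f_U$ is an isomorphism, then localizes to an affine with \'etale coordinates (gluing the local extensions by Zorn's lemma), completes at each codimension-one point, and invokes Katz's Proposition 8.9 to trivialize $\hat\nabla_E$ and $\hat\nabla_F$ over $\hat A_p$; the pole is then excluded by the computation $d_{\hat K}(a\pi^{-k})=\pi^{-k}d_{\hat K}(a)-ka\pi^{-k-1}d_{\hat K}(\pi)\neq 0$, which is essentially your mod-$t$ argument performed after formal trivialization. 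What each approach buys: yours is more self-contained and global --- it avoids the citation to Katz's formal triviality theorem, avoids the graph/restriction-lemma reduction, and replaces local gluing by the Hartogs-type characterization $\Gamma(X,G)=\bigcap_{\eta\in X^{(1)}}G_\eta$; moreover your computation proves slightly more (a nonzero horizontal rational section has order exactly $0$ along every divisor, which also re-proves uniqueness). The paper's route, at the cost of completion and a citation, isolates the characteristic-$0$ input in one standard result and yields the stronger local statement that the connections become trivial over $\hat A_p$. Both arguments use characteristic $0$ at the same pressure point: you divide by the pole order $n$ and need $\overline{dt}\neq 0$; the paper needs Katz's triviality and the non-regularity of $\pi^{-1}d\pi$. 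One small normalization slip worth fixing: writing $s_U=t^{-n}v$ with $v\notin tG_\eta$ and $n\geq 0$ is impossible when $s_U$ vanishes along the divisor (order $>0$), so you should either take $n=-\mathrm{ord}_\eta(s_U)\in\mathbb Z$ and show $n\leq 0$, or note that the case $\mathrm{ord}_\eta(s_U)>0$ is already regular and needs no argument; this does not affect the substance of the proof.
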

\begin{proof}
The uniqueness is clear: if we have two extensions $f$ and $f'$
then the set of points on which $f=f'$ is closed in $X$, but it already contains an open dense subset, so it has to be the whole of $X$. Thus by \ref{restriction lemma} we may assume $f_U$ is an isomorphism.

Now suppose for any point $x\in X\setminus U$ we can extend $f_U$ to
a neighborhood of $x$, then using Zorn's lemma we can extend $f_U$
to a map on $X$. Hence the problem is local. We may assume
$X=\Spec(A)$ to be a smooth integral $k$-algebra with an \'etale
coordinate $X\rightarrow \A^r_k$ ($r=\dim X$), and
$E=F=A^{\oplus n}$. Let $K$ be the fraction field of $A$. The local map $f_U$ provides us with a commutative diagram:
 $$\xymatrix{
K^{\oplus n}\ar[d]_{\nabla_E\otimes_AK}\ar[r]^{f_K}&K^{\oplus n}\ar[d]^{\nabla_F\otimes_AK}\\K^{\oplus n}\otimes_{K}\Omega^1_{K/k}\ar[r]^-{f_K\otimes
id}&K^{\oplus n}\otimes_{K}\Omega^1_{K/k}}.$$
We need to show that $f_K$ takes $A^{\oplus n}\subseteq K^{\oplus n}$ to $A^{\oplus n}\subseteq K^{\oplus n}$. Or, equivalently, $f_K$ takes any vector $e_i\in K^{\oplus n}$ whose $i$-th entry is 1 and other entries are $0$ to a vector whose entries are regular for any codimension 1 point $p\in \Spec(A)$.

For this purpose, we fix a codimension 1 point $p\in \Spec(A)$, $\pi$ a uniformizer in $A_p$. Let $\hat{A}_p$ be the completion of $A_p$, $\hat{K}$ be the fraction field of $\hat{A}_p$.
It suffices to check that the base change map $f_{\hat{K}}$ of $f_K$ sends $\hat{A}_p^{\oplus n}$ to $\hat{A}_p^{\oplus n}$. By \cite[Proposition 8.9]{Katz} we know that the connections $\hat{\nabla}_E$ and $\hat{\nabla}_F$ induced from $\nabla_E$ and $\nabla_F$ via base change $A\subseteq \hat{A}_p$ are trivial connections. This implies $f_{\hat{K}}(e_i)$ is a horizontal section for the connection $\hat{\nabla}_F\otimes_{\hat{A}_p}\hat{K}=d_{\hat{K}}^{\oplus n}$. Suppose there is an entry $w_j$ of $f_{\hat{K}}(e_i)$ which has a pole, i.e. $w_j=a\pi^{-k}$ where $a\in \hat{A}_p$ is invertible and $k$ a positive integer. Then we have $$d_{\hat{K}}(w_j)=\pi^{-k}d_{\hat{K}}(a)-ka\pi^{-k-1}d_{\hat{K}}(\pi).$$ Since $d_{\hat{K}}(a)$ is a regular 1-form and $k$ is of characteristic 0, $d_{\hat{K}}(w_j)=0$ would imply $\pi^{-1}d(\pi)$ is a regular 1-form. This is absurd.
\end{proof}

\begin{lem}\label{restriction lemma} Let $X$ be a smooth $k$-scheme, $U\subseteq X$ be an open
subset, $(E,\nabla_E)\in\Mod_c(D_{X/k})$ and suppose that there is an
injection
$$(F',\nabla_F')\hookrightarrow(E,\nabla_E)|_{U/k}\in\Mod_c(D_{U/k}).$$
Then there exists a subobject
$(F,\nabla_F)\subseteq(E,\nabla_E)\in\Mod_c(D_{X/k})$ such that
$(F,\nabla_F)|_{U/k}= (F',\nabla_F')$ as subojects of
$(E,\nabla_E)|_{U/k}$
\end{lem}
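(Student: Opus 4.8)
The plan is to build $(F,\nabla_F)$ as the \emph{saturation} of $(F',\nabla_{F'})$ inside $(E,\nabla_E)$: push forward along the open immersion $j\colon U\hookrightarrow X$, intersect with $E$, and then promote the resulting coherent $\nabla_E$-stable subsheaf to a genuine object of $\Mod_c(D_{X/k})$ using the fact recalled at the start of this section, namely that in characteristic $0$ an $O_X$-coherent $D_{X/k}$-module on a smooth scheme is automatically a vector bundle with flat connection. First I would reduce to the case $X$ integral and $U$ dense: since $X$ is smooth over $k$ it is regular, so each connected component is integral; on a component meeting $U$ its trace is dense, while on a component disjoint from $U$ the required identity $(F,\nabla_F)|_{U/k}=(F',\nabla_{F'})$ is vacuous and one takes the zero subobject there. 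So assume $X$ integral and $U$ dense, and write $(Q',\nabla_{Q'}):=(E,\nabla_E)|_{U/k}/(F',\nabla_{F'})$ for the quotient connection on $U$.

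Next I would produce the candidate subsheaf. As $E$ is a vector bundle on an integral scheme and $U$ is dense, the adjunction unit $E\to j_*j^*E=j_*(E|_U)$ is injective; moreover the projection formula together with $\Omega^1_{U/k}=j^*\Omega^1_{X/k}$ identifies $j_*\big(E|_U\otimes\Omega^1_{U/k}\big)$ with $j_*(E|_U)\otimes\Omega^1_{X/k}$, so $\nabla_E|_U$ pushes forward to a connection $\tilde\nabla$ on $j_*(E|_U)$ whose restriction to the subsheaf $E$ is $\nabla_E$. I then set $F:=\ker\big(E\to j_*(E|_U)\to j_*Q'\big)$, which is a coherent subsheaf of $E$ (a subsheaf of a coherent sheaf on a noetherian scheme), and restricting to $U$ gives $F|_U=\ker(E|_U\to Q')=F'$, exactly the required restriction.

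The key point, and the only real obstacle, is to verify that $F$ is stable under $\nabla_E$. I would check this by observing that the composite $\psi\colon E\to j_*(E|_U)\to j_*Q'$ is horizontal: the first arrow is horizontal for $(\nabla_E,\tilde\nabla)$ by functoriality of the adjunction unit applied to $\nabla_E$, and the second is $j_*$ of the horizontal quotient map $E|_U\to Q'$. Since $\Omega^1_{X/k}$ is locally free, the kernel of a horizontal map is stable under the connection, whence $\nabla_E(F)\subseteq F\otimes\Omega^1_{X/k}$. (Equivalently, $F=E\cap j_*F'$ is an intersection of two $\tilde\nabla$-stable subsheaves of $j_*(E|_U)$, and such intersections are $\nabla$-stable precisely because tensoring with the locally free $\Omega^1_{X/k}$ preserves intersections.) The density of $U$ is what makes this work: it lets me certify stability by a single global horizontal map on $X$ rather than by gluing local extensions.

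Finally, $(F,\nabla_E|_F)$ is a coherent sheaf equipped with a flat connection, hence an $O_X$-coherent $D_{X/k}$-module, and therefore by the characteristic-$0$ identification it is a vector bundle with flat connection, i.e. an object $(F,\nabla_F)\in\Mod_c(D_{X/k})$. The inclusion $F\hookrightarrow E$ is horizontal, so $(F,\nabla_F)$ is a subobject of $(E,\nabla_E)$, and by construction $(F,\nabla_F)|_{U/k}=(F',\nabla_{F'})$ as subobjects of $(E,\nabla_E)|_{U/k}$, which is what was to be shown.
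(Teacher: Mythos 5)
Your proposal is correct and takes essentially the same route as the paper: since $j_*$ is left exact, your $F=\ker\big(E\to j_*(E|_U)\to j_*Q'\big)$ is literally the paper's $F$ (the preimage of $j_*F'$ under the adjunction map $E\to j_*j^*E$), and your horizontality argument via the pushforward connection on $j_*Q'$ just spells out the step the paper dismisses as ``one checks easily.'' Your opening reduction to $X$ integral and $U$ dense is harmless but unnecessary, since the construction works verbatim on all of $X$ and the lemma imposes no condition on $F$ away from $U$.
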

\begin{proof}Let $j: U\subseteq X$ be the inclusion. We take $F$ to
be the inverse image of $j_*F'$ under the adjunction map $E\to
j_*j^*E$. Then $F$ is a coherent sheaf, and one checks easily that
$F\to E\xrightarrow{\nabla_E} E\times_{O_X}\Omega_{X/k}^1$ factors
through $F\times_{O_X}\Omega_{X/k}^1\to
E\times_{O_X}\Omega_{X/k}^1$. Hence $F$ is equipped with a
connection $\nabla_F$ and becomes a subobject of $(E,\nabla_E)$.
Clearly $(F,\nabla_F)|_{U/k}$ is equal to $(F',\nabla_{F'})$ as
subojects (since $F|_U$ is equal to $F'$).
\end{proof}

\ref{extension prop} and \ref{restriction lemma} imply immediately the
following:\begin{lem} \label{new tannakian} The category $\Mod_c(D_{S/k},S)$  {\rm(}resp.
$\Mod_c(D_{X/S/k},f,s)${\rm)} is an abelian $k$-linear rigid tensor
category equipped with an exact faithful $k$-linear tensor
functor $(M,U)\mapsto M|_s$ {\rm(}resp. $(M,U)\mapsto M|_x${\rm)}, i.e.  a neutral Tannakian category. So we have a Tannakian
group $\hat{\pi}^{\rm \text{alg}}(S,s)$ {\rm(}resp.$\hat{\pi}^{\rm
\text{alg}}(X,x)${\rm)} associated to $\Mod_c(D_{S/k},s)$
{\rm(}resp. $\Mod_c(D_{X/S/k},f,s)${\rm)}. Furthermore, the
canonical functor $\Mod_c(D_{S/k})\to \Mod_c(D_{S/k},s)$ {\rm(}resp.
$\Mod_c(D_{X/k})\to \Mod_c(D_{X/S/k},f,s)${\rm)} is fully faithful
and stable under taking subquotients. Thus we get a canonical
surjection $\hat{\pi}^{\rm
\text{alg}}(S,s)\twoheadrightarrow{\pi}^{\rm \text{alg}}(S,s)$
{\rm(}resp. $\hat{\pi}^{\rm
\text{alg}}(X,x)\twoheadrightarrow{\pi}^{\rm
\text{alg}}(X,x)${\rm)}.\end{lem}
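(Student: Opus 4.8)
The plan is to recognize $\Mod_c(D_{S/k},s)$ as the filtered $2$-colimit of the neutral Tannakian categories $\Mod_c(D_{U/k})$ taken over the open neighborhoods $U\ni s$, with transition functors given by restriction, and then to transport the Tannakian structure through this colimit. The only non-formal point in checking that the data of the Definition even forms a category is the well-definedness of composition, and this is exactly where \ref{extension prop} enters. Given $\phi\in\Mor((U,M),(U',M'))$ and $\psi\in\Mor((U',M'),(U'',M''))$, I would restrict both to $W:=U\cap U'\cap U''$ and compose there to get a horizontal map $M|_W\to M''|_W$; since $S$ is irreducible and $W\ni s$, the set $W$ is a dense open of the smooth connected scheme $U\cap U''$, so by \ref{extension prop} this map extends uniquely to a morphism $M|_{U\cap U''}\to M''|_{U\cap U''}$, which I declare to be $\psi\circ\phi$. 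The uniqueness clause of \ref{extension prop} simultaneously yields independence of the chosen representatives, bijectivity of the restriction $\Hom_{U\cap U'}(\cdots)\to\Hom_V(\cdots)$ for every dense open $V\subseteq U\cap U'$, and associativity; this is precisely the assertion that the concrete $\Mor$ of the Definition agrees with the colimit $\Hom$, i.e. that $\Mod_c(D_{S/k},s)$ is the $2$-colimit above. I expect this to be the main obstacle: everything else is formal once composition is under control.

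With the colimit description in hand, the abelian, $k$-linear and rigid tensor structures are inherited objectwise, since any finite diagram of objects lives on a common neighborhood $V\ni s$, where $\Mod_c(D_{V/k})$ supplies kernels, cokernels, tensor products and duals; the unit object is $(S,\sO_S)$. The functor $\omega\colon(U,M)\mapsto M|_s$ is $k$-linear, exact and monoidal because restriction to the fibre at $s$ is. For faithfulness I would use that a horizontal morphism $\phi\colon M\to M'$ on a connected $V\ni s$ has locally free image, hence constant rank, so that $\omega(\phi)=\phi|_s=0$ forces $\phi=0$. Thus $\Mod_c(D_{S/k},s)$ is a neutral Tannakian category and defines $\hat{\pi}^{\rm alg}(S,s)$.

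Finally, for the comparison functor $\iota\colon\Mod_c(D_{S/k})\to\Mod_c(D_{S/k},s)$, $(E,\nabla_E)\mapsto(S,E)$, full faithfulness is immediate from the Definition, because $\Mor((S,E),(S,F))=\Hom_S(E,F)$. Stability under subquotients is supplied by \ref{restriction lemma}: a subobject of $(S,E)$ is an injection $M\hookrightarrow E|_U$ over some $U\ni s$, which \ref{restriction lemma} extends to a subobject $F\subseteq E$ on all of $S$ with $F|_U=M$, so $(U,M)\cong(S,F)=\iota(F)$; a quotient of $(S,E)$ is handled by extending the kernel of $E|_U\twoheadrightarrow M$ and passing to $E/F$. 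By \ref{General Theory}(1), equivalently Condition (1)$'$ of \ref{noninj}, a fully faithful tensor functor whose image is stable under subquotients corresponds to a faithfully flat homomorphism, which gives the surjection $\hat{\pi}^{\rm alg}(S,s)\twoheadrightarrow\pi^{\rm alg}(S,s)$. The case of $\Mod_c(D_{X/S/k},f,s)$ is verbatim the same, replacing each neighborhood $U$ by $f^{-1}(U)$, each dense open comparison by its $f$-preimage (still dense, as $f$ is flat and dominant), the point $s$ by $x$, and invoking \ref{extension prop} and \ref{restriction lemma} on the smooth connected schemes $f^{-1}(U)$.
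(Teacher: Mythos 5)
Your proposal is correct and follows essentially the same route as the paper, which deduces the lemma directly from \ref{extension prop} and \ref{restriction lemma}: you use \ref{extension prop} (uniqueness and existence of extensions over dense opens, which all opens containing $s$ are, by irreducibility) to make composition well-defined and to identify the categories with filtered $2$-colimits of the Tannakian categories $\Mod_c(D_{U/k})$, and \ref{restriction lemma} to verify stability under subquotients, concluding surjectivity via \ref{General Theory}(1). The paper leaves all of this as an immediate consequence, and your expansion fills in exactly the intended formal verifications without deviating from its method.
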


Using results in \S  3.1 and applying  \ref{lemma 3.4}
to $\Mod_c(D_{X/S/k},f,s)$ and $\Mod_c(D_{S/k},s)$ we get:
\begin{thm}
The homotopy sequence$$\pi^{\rm \text{alg}}(X_s,x)\to\hat{\pi}^{\rm
\text{alg}}(X,x)\to\hat{\pi}^{\rm \text{alg}}(S,s)\to 1$$ is exact.
Moreover, there is a commutative diagram
$$\xymatrix{\pi^{\rm \text{alg}}(X_s,x)\ar[r]\ar@{=}[d]&\hat{\pi}^{\rm \text{alg}}(X,x)\ar[r]\ar@{>>}[d]&\hat{\pi}^{\rm \text{alg}}(S,s)\ar[r]\ar@{>>}[d]&1\\
\pi^{\rm \text{alg}}(X_s,x)\ar[r]&\pi^{\rm
\text{alg}}(X,x)\ar[r]&\pi^{\rm \text{alg}}(S,s)\ar[r]&1}$$ where all the vertical arrows are surjective.
\end{thm}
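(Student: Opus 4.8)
The plan is to verify the hypotheses of Theorem \ref{General Theory}(3) for the sequence of neutral Tannakian categories
$$\Mod_c(D_{S/k},s)\xrightarrow{p^*}\Mod_c(D_{X/S/k},f,s)\xrightarrow{q^*}\Mod_c(D_{X_s/k}),$$
where $p^*=f^*$ and $q^*$ is restriction to the fibre $X_s$, which by Lemma \ref{new tannakian} is the sequence of fibre functors dual to $\pi^{\alg}(X_s,x)\xrightarrow{q}\hat{\pi}^{\alg}(X,x)\xrightarrow{p}\hat{\pi}^{\alg}(S,s)$. Since I do not intend to prove that $q$ is a closed immersion, I would use the form of the criterion in Remark \ref{noninj}: surjectivity of $p$ is checked via Condition (1)$'$, and (c) is replaced by Condition (c)$'$. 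The observation that makes Section 3.1 usable here is that every object and every morphism of a germ category is represented on some $f^{-1}(U)\to U$ with $s\in U$; this restricted morphism inherits the hypotheses of Section 3.1 (smooth, proper, geometrically connected fibres, with $x\in X(k)$ lying over $s\in U$), and the germ categories are the filtered colimits of the $\Mod_c(D_{f^{-1}(U)/k})$ over the neighbourhoods $U$ of $s$, so the statements of Section 3.1 pass to the colimit.

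First I would establish surjectivity of $p$ by checking Condition (1)$'$, that $f^*$ is fully faithful with essential image stable under subobjects. Full faithfulness is the projection formula read on a common neighbourhood of $s$. For stability under subobjects, given a germ subobject $(U,F)\hookrightarrow f^*(U,E)$, I would run the proof of Theorem \ref{complex} on $f^{-1}(U)\to U$: the pushforward $f_*F$ is a subbundle of $E$ carrying an induced flat connection $f_*\nabla_F$, with $f^*(f_*F,f_*\nabla_F)\cong(F,\nabla_F)$, exhibiting $(U,F)$ as a pullback. Conditions (a) and (b) then follow verbatim from Section 3.1, where the restriction to $X_s$ of the maximal pullback subobject $f^*H^0_{DR}(X/S,E)\hookrightarrow E$ is identified, via base change for the Gauss--Manin connection, with the maximal trivial subobject of $E|_{X_s}$; this yields both the characterisation of pullbacks in (a) and the descent of the maximal trivial subobject in (b).

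The essential input is Condition (c)$'$, where I would invoke Proposition \ref{lemma 3.4}. Given $W'\in\Mod_c(D_{X/S/k},f,s)$, represented on some $f^{-1}(U_1)\to U_1$, I may shrink the whole datum $f_0\colon X_0\to S_0$ to $f_0^{-1}(U_1)\to U_1$; since $s$ is the generic geometric point this changes neither $s$, nor $X_s$, nor the germ categories, so the representative becomes an honest object $(E,\nabla_E)\in\Mod_c(D_{X/k})$. A subobject $(F',\nabla_{F'})\hookrightarrow q^*W'=(E,\nabla_E)|_{X_s/k}$ is then precisely the input of Proposition \ref{lemma 3.4}, which returns a non-empty open $U_0\subseteq S_0$ and $(F_0,\nabla_{F_0})\in\Mod_c(D_{f_0^{-1}(U_0)/k_0})$ with a surjection $(F_0,\nabla_{F_0})|_{X_s/k}\twoheadrightarrow(F',\nabla_{F'})$. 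Base changing $(F_0,\nabla_{F_0})$ along $k_0\to k$ produces $V\in\Mod_c(D_{X/S/k},f,s)$ with $q^*V\twoheadrightarrow F'$, which is exactly Condition (c)$'$ in its dual (subobject/surjection) form. With (1)$'$, (a), (b) and (c)$'$ in hand, Theorem \ref{General Theory}(3) together with Remark \ref{noninj} gives exactness of the top row.

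It remains to assemble the commutative diagram. The two right-hand vertical maps are the canonical surjections $\hat{\pi}^{\alg}(X,x)\twoheadrightarrow\pi^{\alg}(X,x)$ and $\hat{\pi}^{\alg}(S,s)\twoheadrightarrow\pi^{\alg}(S,s)$ of Lemma \ref{new tannakian}, dual to the fully faithful, subquotient-closed inclusions of the honest categories into the germ categories; the left vertical map is the identity because the fibre category $\Mod_c(D_{X_s/k})$ is literally the same in both rows. The bottom row is the ordinary homotopy sequence, a complex with surjective right arrow by Theorem \ref{complex}. Commutativity of the two squares is the compatibility of $q^*$ and of $f^*$ with these inclusions, which is immediate on representatives. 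I expect the only genuinely delicate point of this theorem to be the transfer of the Section 3.1 arguments to the germ categories in the verification of (1)$'$, (a) and (b) --- namely confirming that passage to the filtered colimit over neighbourhoods of $s$ preserves fullness, exactness, and the base-change identification of the maximal trivial subobject --- since everything substantive in (c)$'$ has already been isolated in Proposition \ref{lemma 3.4}.
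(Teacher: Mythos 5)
Your strategy is exactly the paper's: the paper proves this theorem with the single sentence ``Using results in \S 3.1 and applying \ref{lemma 3.4} to $\Mod_c(D_{X/S/k},f,s)$ and $\Mod_c(D_{S/k},s)$'', and your verification of Condition (1)$'$, (a), (b) on representatives $f^{-1}(U)\to U$, your use of Condition (c)$'$ from Remark \ref{noninj}, and your assembly of the diagram from Lemma \ref{new tannakian} fill that sentence in faithfully. There is, however, a genuine gap at the one point where you go beyond bookkeeping, namely the reduction ``I may shrink the whole datum $f_0\colon X_0\to S_0$ to $f_0^{-1}(U_1)\to U_1$''. The open $U_1$ on which a germ object $W'$ is represented is an open subset of $S=S_0\times_{k_0}k$, \emph{not} of $S_0$, so $f_0^{-1}(U_1)$ is not even defined; what your reduction actually requires is a non-empty open $U_0\subseteq S_0$ with $s\in U_0\times_{k_0}k\subseteq U_1$, for then the representative lives on $(f_0^{-1}(U_0))\times_{k_0}k$ and is an honest object of the shrunken Setup. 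Such a $U_0$ need not exist. Take $k_0=\Q$, $S_0=\A^1_{\Q}$ with coordinate $x$, and $k=\overline{\Q(t)}$ with $t$ the coordinate of the generic point, so that the image $\tilde{s}$ of $s$ is the closed point $x=t$ of $S=\A^1_k$. The open $U_1=\A^1_k\setminus\{x=t+1\}$ contains $\tilde{s}$, but the deleted point lies over the generic point of $S_0$ (since $t+1\notin\overline{\Q}$), so every open of the form $U_0\times_{\Q}k$ meets it: $U_1$ contains no pull-back neighbourhood of $\tilde{s}$ whatsoever. Nor can you evade this inside the germ category, where an object is isomorphic only to its restrictions to \emph{smaller} opens containing $\tilde{s}$: there are germ objects genuinely supported on such a $U_1$ (a rank one connection with an irregular singularity along $x=t+1$ does not extend to $S$), and these are precisely the objects for which $\hat{\pi}^{\rm alg}$ differs from $\pi^{\rm alg}$, so they cannot be ignored in (c)$'$.

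Consequently Proposition \ref{lemma 3.4}, whose input is an object of $\Mod_c(D_{X/k})$ defined on all of $X$, cannot be quoted verbatim for a general $W'\in\Mod_c(D_{X/S/k},f,s)$; as written, your argument covers only germ objects defined over pull-back opens. The gap is repairable by the same spreading-out technique as in Lemmas \ref{lemma 3.5} and \ref{lemma 3.6}, but the step must be supplied: since $S$ is the limit of the schemes $S_0\times_{k_0}k'$ over the finite subextensions $\kappa(S_0)\subseteq k'\subseteq k$, the quasi-compact open $U_1$ together with $(E,\nabla_E)$ descends to some finite level $k'$, where the representative is defined on a neighbourhood of the fibre over the $k'$-rational point below $s$; choosing a model $T_0\to S_0$ of $k'$, finite and \'etale over a non-empty open of $S_0$, spreading the descended object over an open $V_1\subseteq T_0$, and pushing forward along the finite \'etale map $\lambda\colon Z_{T_0}\to X_0$ exactly as in the proof of Proposition \ref{lemma 3.4} (with $U_0:=S_0\setminus c(T_0\setminus V_1)$, $c\colon T_0\to S_0$, which is a non-empty open since the only point of $T_0$ above the generic point of $S_0$ is its own generic point), one obtains an object of $\Mod_c(D_{f_0^{-1}(U_0)/k_0})$ whose fibre restriction surjects onto that of $W'$; replacing $F'$ by its preimage under this surjection, your argument then proceeds as proposed. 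The rest of your write-up --- the colimit transfer of \S 3.1 to the germ categories, the base change of $(F_0,\nabla_{F_0})$ along $k_0\to k$ (whose target open $U_0\times_{k_0}k$ does contain $s$), and the commutative diagram via Lemma \ref{new tannakian} --- is correct and agrees with the paper.
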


\begin{thm}\label{Theorem 3.12}
The
homotopy sequence
$$\pi^{\rm \text{alg}}(X_s,x)\to\pi^{\rm \text{alg}}(X,x)\to\pi^{\rm
\text{alg}}(S,s)\to 1$$ is exact.
\end{thm}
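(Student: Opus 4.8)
The plan is to verify the remaining hypothesis of the general criterion \ref{General Theory} directly for the honest Tannakian categories $\Mod_c(D_{X/k})$, $\Mod_c(D_{X_s/k})$, $\Mod_c(D_{S/k})$, and then to read off the exactness from that criterion. By \ref{complex} the arrow $\pi^{\alg}(X,x)\to\pi^{\alg}(S,s)$ is faithfully flat, and by the results of \S 3.1 conditions (a) and (b) of \ref{General Theory}(3) already hold; since we do not assert injectivity of the left arrow, Remark \ref{noninj} tells us that the only thing left to check is condition (c)'. Unwound, this says: given $(E,\nabla_E)\in\Mod_c(D_{X/k})$ and a subobject $(F',\nabla_{F'})\hookrightarrow(E,\nabla_E)|_{X_s/k}$, I must produce a \emph{global} object $(F,\nabla_F)\in\Mod_c(D_{X/k})$ together with a surjection $(F,\nabla_F)|_{X_s/k}\twoheadrightarrow(F',\nabla_{F'})$.

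First I would feed $(E,\nabla_E)$ and $(F',\nabla_{F'})$ into Proposition \ref{lemma 3.4}. In the generic geometric point setup of \S 3.2 this returns a non-empty open $U=U_0\times_{k_0}k\subseteq S$ containing $s$ together with an object $(F_U,\nabla_{F_U})\in\Mod_c(D_{f^{-1}(U)/k})$ (the base change to $k$ of the object $(F_0,\nabla_{F_0})$ produced there) admitting a surjection $(F_U,\nabla_{F_U})|_{X_s/k}\twoheadrightarrow(F',\nabla_{F'})$. The point I would stress is that $(F_U,\nabla_{F_U})$ is not merely an abstract connection on $f^{-1}(U)$: by its construction through Lemma \ref{lemma 3.6} it sits as a \emph{subobject} of the restriction to $f^{-1}(U)$ of a globally defined object $(E',\nabla_{E'})\in\Mod_c(D_{X/k})$ (an exterior-power and tensor construction on a finite pushforward of $(E,\nabla_E)$; its precise shape is irrelevant, only its being defined over all of $X$ matters). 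This is exactly the handle needed to cross the boundary $X\setminus f^{-1}(U)$.

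With that in hand I would apply the restriction lemma \ref{restriction lemma} to the ambient object $(E',\nabla_{E'})$, the open $f^{-1}(U)\subseteq X$, and the subobject $(F_U,\nabla_{F_U})\hookrightarrow(E',\nabla_{E'})|_{f^{-1}(U)/k}$. It yields a global subobject $(F,\nabla_F)\subseteq(E',\nabla_{E'})$ in $\Mod_c(D_{X/k})$ with $(F,\nabla_F)|_{f^{-1}(U)/k}=(F_U,\nabla_{F_U})$. Because $s\in U$ we have $X_s\subseteq f^{-1}(U)$, so restricting to the fibre gives $(F,\nabla_F)|_{X_s/k}=(F_U,\nabla_{F_U})|_{X_s/k}\twoheadrightarrow(F',\nabla_{F'})$. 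This is precisely condition (c)', and \ref{General Theory}(3) now delivers the exactness of the homotopy sequence.

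The main obstacle is exactly the passage from a connection living only over the open locus $f^{-1}(U)$, where the Gauss--Manin-type constructions of \S 3.2 are available, to one defined over all of $X$: a flat connection on $f^{-1}(U)$ will in general acquire genuine singularities along $X\setminus f^{-1}(U)$ and need not extend at all. What defeats this is the bookkeeping that keeps $(F_U,\nabla_{F_U})$ as a subobject of a connection $(E',\nabla_{E'})$ already defined on all of $X$, so that \ref{restriction lemma} applies: it extends the subconnection simply by taking the preimage of $j_*(F_U)$ under the adjunction $E'\to j_*j^*E'$, and the flatness is inherited from $(E',\nabla_{E'})$ with no extension problem left to solve. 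An alternative would be a group-scheme diagram chase built on the commutative square of the previous theorem, reducing exactness of the bottom row to the assertion that the top horizontal map sends $\ker\big(\hat\pi^{\alg}(X,x)\to\pi^{\alg}(X,x)\big)$ onto $\ker\big(\hat\pi^{\alg}(S,s)\to\pi^{\alg}(S,s)\big)$; I would avoid it, since checking that surjectivity of kernels is markedly less transparent than the direct extension above.
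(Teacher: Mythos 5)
Your argument hinges on the claim that the object produced by Proposition \ref{lemma 3.4} comes embedded in the restriction to $f^{-1}(U)$ of a \emph{globally} defined object $(E',\nabla_{E'})\in\Mod_c(D_{X/k})$. That claim is false, and it is exactly the point the paper's germ-category machinery exists to circumvent. Trace the construction: the ambient object in \ref{lemma 3.4} is $\lambda_*\beta^*(E_0,\nabla_{E_0})$ (further replaced inside \ref{lemma 3.6} by a pushforward $\beta_*\beta^*(\cdot)$ twisted by $(\wedge^{r-1}(\cdot))^{\vee}$), where $(E_0,\nabla_{E_0})$ is the model of $(E,\nabla_E)$ furnished by Lemma \ref{lemma 3.5}. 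Every step exists only after shrinking the base: Lemma \ref{lemma 3.5} produces $T_0$ finite \'etale only over a non-empty open $U_0\subseteq S_0$ (the map $T_2\to S$ there is \'etale only at the generic point of $T_2$); the reduction ``we may assume $U_0=S_0$'' in the proof of \ref{lemma 3.4} is legitimate only because the conclusion of \ref{lemma 3.4} itself permits shrinking; and in the general case of Lemma \ref{lemma 3.6} the extension $R\subseteq R'$ is finite \'etale only after passing to a smaller open of $S_0$. These covers do not extend over all of $S_0$ (they ramify along the boundary), so the pushforwards $\lambda_*$, $\beta_*$ are flat connections only over the preimage of an open. Hence the ambient object is an object of the germ category $\Mod_c(D_{X/S/k},f,s)$, \emph{not} of $\Mod_c(D_{X/k})$, and Lemma \ref{restriction lemma} has nothing to bite on: your appeal to it silently assumes away the very extension problem you correctly identify as the main obstacle. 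Indeed, given (a) and (b), condition (c)' for the honest categories is equivalent to the exactness you are trying to prove, and neither the paper nor your argument verifies it directly.

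What the paper actually does is (a sharpened form of) the alternative you dismiss at the end. Condition (c)' is checked only for the germ categories, yielding the exact sequence $\pi^{\alg}(X_s,x)\to\hat{\pi}^{\alg}(X,x)\to\hat{\pi}^{\alg}(S,s)\to 1$ together with the surjections $\hat{\pi}^{\alg}\twoheadrightarrow\pi^{\alg}$ of Lemma \ref{new tannakian}. The proof of \ref{Theorem 3.12} is then a short group-scheme argument requiring no ``surjectivity of kernels'': the image of $\pi^{\alg}(X_s,x)$ in $\hat{\pi}^{\alg}(X,x)$ is a kernel, hence normal, so its image in $\pi^{\alg}(X,x)$ under the surjection is normal; setting $G:=\Coker(\pi^{\alg}(X_s,x)\to\pi^{\alg}(X,x))$ one gets a surjection $G\twoheadrightarrow\pi^{\alg}(S,s)$, condition (a) shows $\Rep_k(\pi^{\alg}(S,s))\to\Rep_k(G)$ is essentially surjective, and faithful flatness makes it fully faithful and stable under subobjects, hence an equivalence, so $G\cong\pi^{\alg}(S,s)$. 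Your instinct that the group-theoretic route is ``less transparent'' led you to discard the only route that works; as written, your proof has a genuine gap at its central step.
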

\begin{proof}
From the surjectivity of $\hat{\pi}^{\rm
\text{alg}}(X,x)\twoheadrightarrow \pi^{\rm \text{alg}}(X,x)$ we
know that the image of $\pi^{\rm \text{alg}}(X_s,x)$ is a normal
subgroup of $\pi^{\rm \text{alg}}(X,x)$. Taking
$G:=\Coker(\pi^{\rm \text{alg}}(X_s,x)\to\pi^{\rm
\text{alg}}(X,x))$  we get a surjective map of group schemes
$G\twoheadrightarrow \pi^{\rm \text{alg}}(S,s)$. From condition (a)
we know the functor
$$\Rep_k(\pi^{\rm
\text{alg}}(S,s))\to \Rep_k(G)$$ is essentially surjective, while
the surjectivity of $G\twoheadrightarrow \pi^{\rm \text{alg}}(S,s)$
tells us that
$$\Rep_k(\pi^{\rm
\text{alg}}(S,s))\to \Rep_k(G)$$ is an equivalence of categories.
This finishes the proof.
\end{proof}
\subsection{The general case}
In this subsection, we come back to the general settings. All the assumptions are the same as those in \S3.1.
\begin{prop}\label{base change}
Let $k\subseteq k'$ be  a field extension. Let $f' : X'\to S'$ be the base change of the
morphism $f: X\to S$ from $k$ to $k'$, $x',s'$ be the base change of $x,s$ respectively. If the sequence of
$k'$-group scheme homomorphisms$$\pi^{\rm \text{alg}}({X}_{s'}',x')\to\pi^{\rm
\text{alg}}(X',x')\to\pi^{\rm \text{alg}}(S',s')\to 1$$ is exact, then the sequence of $k$-group scheme homomorphisms
$$\pi^{\rm
\text{alg}}(X_s,x)\to\pi^{\rm \text{alg}}(X,x)\to\pi^{\rm
\text{alg}}(S,s)\to 1$$ is also exact.
\end{prop}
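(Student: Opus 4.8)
The plan is to reduce the statement to a descent question for a single flat connection. The key point is that conditions (a) and (b) of \ref{General Theory}(3), as well as the surjectivity of the right--hand arrow (\ref{complex}), were verified in \S 3.1 for an \emph{arbitrary} $k$--rational base point, and smoothness, properness and geometric connectedness of the fibres are all preserved under the base change $k\subseteq k'$; hence these hold for both $f/k$ and $f'/k'$. As observed in \S 3.2, this means that over each of the two fields the exactness of the homotopy sequence is equivalent to condition (c) in the form (c)$'$ of \ref{noninj}: for every $(E,\nabla_E)\in\Mod_c(D_{X/k})$ and every subobject $(G,\nabla_G)\hookrightarrow(E,\nabla_E)|_{X_s/k}$ there exists $(F,\nabla_F)\in\Mod_c(D_{X/k})$ together with a surjection $(F,\nabla_F)|_{X_s/k}\twoheadrightarrow(G,\nabla_G)$. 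So I only have to derive this condition over $k$ from the same condition over $k'$.

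Given such $(E,\nabla_E)$ and $(G,\nabla_G)$ over $k$, I would pull them back along the projection $X'\to X$ to get $(E',\nabla_{E'})\in\Mod_c(D_{X'/k'})$ and a subobject $(G',\nabla_{G'})\hookrightarrow(E',\nabla_{E'})|_{X'_{s'}/k'}$, with $G'=G\otimes_k k'$ since $G$ already lives over $k$. Condition (c) over $k'$ then yields $(F',\nabla_{F'})\in\Mod_c(D_{X'/k'})$ with a surjection onto $(G',\nabla_{G'})$ after restriction to $X'_{s'}$. Everything now comes down to descending this object, together with its surjection, from $k'$ to $k$; this descent is the real content of the proposition.

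I would carry out the descent in three steps. First, because $X$ is of finite type over $k$ and all the data are finitely presented, $(F',\nabla_{F'})$ and its surjection are defined over a finitely generated subextension $k\subseteq k_1\subseteq k'$: writing $k'=\varinjlim k_\lambda$ over its finitely generated subextensions and using faithfully flat descent of surjectivity along $k_1\to k'$, one obtains $(F_1,\nabla_{F_1})\in\Mod_c(D_{X_1/k_1})$, with $X_1=X\times_k k_1$, and a surjection $(F_1,\nabla_{F_1})|_{X_{1,s_1}/k_1}\twoheadrightarrow G\otimes_k k_1$. Second, choosing a transcendence basis I factor $k\subseteq k(\underline t)\subseteq k_1$ with $k_1/k(\underline t)$ finite separable (we are in characteristic $0$); then $X_1\to X_{k(\underline t)}:=X\times_k k(\underline t)$ is finite \'etale, so the pushforward construction of \ref{lemma 3.6} gives $\pi_*(F_1,\nabla_{F_1})\in\Mod_c(D_{X_{k(\underline t)}/k(\underline t)})$, and composing the pushed--forward surjection with the trace (counit) map, which is surjective by separability, produces a surjection over $k(\underline t)$ onto $G\otimes_k k(\underline t)$.

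The last step is the delicate one. Here I view $X_{k(\underline t)}$ as the generic fibre of the constant family $X\times_k\A^d_k\to\A^d_k$ and spread the object out as in \ref{lemma 3.5}, where the auxiliary finite \'etale cover is now birational over the base and hence trivial after shrinking; this gives an object $\tilde F\in\Mod_c(D_{(X\times_k U)/U})$ over a dense open $U\subseteq\A^d_k$ together with a surjection $\tilde F|_{X_s\times_k U}\twoheadrightarrow p^*G$, where $p\colon X_s\times_k U\to X_s$ is the projection (the target is $p^*G$ precisely because $G$ descends to $k$). Since $k$ has characteristic $0$ it is infinite, so the nonempty open $U$ has a $k$--rational point $u$; restricting along $X\times_k\{u\}=X$ converts the connection relative to $U$ into one relative to $k$ and yields $(F,\nabla_F):=\tilde F|_{X\times_k\{u\}}\in\Mod_c(D_{X/k})$, while right exactness of restriction to the fibre turns $\tilde F|_{X_s\times_k U}\twoheadrightarrow p^*G$ into the desired surjection $(F,\nabla_F)|_{X_s/k}\twoheadrightarrow(G,\nabla_G)$. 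This establishes condition (c) over $k$ and hence the exactness of the $k$--homotopy sequence. I expect this transcendental specialization to be the main obstacle: one must guarantee a rational point in the spreading--out locus (using that $k$ is infinite) and check that both the flatness of the connection and the surjectivity survive restriction to the special fibre.
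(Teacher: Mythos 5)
Your proposal is correct, but it takes a genuinely different route from the paper. The paper's proof of \ref{base change} is purely group-theoretic: it introduces the full subcategory $\mathfrak{C}(X')\subseteq\Mod_c(D_{X'/k'})$ of objects whose pushforward to $X$ is an inductive limit of coherent subobjects, identifies its Tannaka dual with $\pi^{\rm alg}(X,x)\times_kk'$ via Deligne's base-change results \cite[10.38, 10.41]{De1}, deduces that the comparison maps $\pi^{\rm alg}(X',x')\twoheadrightarrow\pi^{\rm alg}(X,x)\times_kk'$ (and likewise for $X_s$ and $S$) are surjective because $\mathfrak{C}$ is stable under subquotients, transfers normality of the image of $\pi^{\rm alg}(X'_{s'},x')$ down these surjections and along fpqc descent, and then concludes by the cokernel argument of \ref{Theorem 3.12}, using only condition (a) and the surjectivity already established over $k$ in \S 3.1. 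You instead descend condition (c)$'$ of \ref{noninj} itself from $k'$ to $k$: spread the witness $(F',\nabla_{F'})$ out to a finitely generated subextension $k_1$, push forward along the finite \'etale map $X_1\to X_{k(\underline t)}$ and compose with the (horizontal, surjective by separability) trace -- exactly the mechanism of \ref{lemma 3.6} -- then spread out over an open $U\subseteq\A^d_k$ as in \ref{lemma 3.5} (where, as you correctly note, the auxiliary cover is birational hence trivial after shrinking) and specialize at a $k$-rational point $u\in U(k)$, which exists since $k$ is infinite in characteristic $0$. All the steps check out: integrability, horizontality of the surjection, and surjectivity itself all survive the limit argument and the restriction to the fibre over $u$, and your reduction to (c)$'$ is legitimate because (a), (b) and surjectivity hold over both fields for arbitrary rational base points, and Remark \ref{noninj}(II) makes the criterion applicable without injectivity of the left arrow (so no normality argument is needed on your route). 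Comparing the two: the paper's argument is shorter, uniform in the field extension, and silent about condition (c), at the price of invoking Deligne's Tannakian base-change machinery; yours is more constructive and self-contained -- it exhibits the witness object for (c) directly over $k$ by specialization -- at the price of more bookkeeping and of re-deploying the spreading-out lemmas of \S 3.2 in a second context.
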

\begin{proof} Let $\mathfrak{C}(X')$ be the full subcategory of
$\Mod_c(D_{X'/k'})$ whose objects are those whose push forward along
the projection $X'\to X$ are the inductive limits of their coherent
subojects (i.e. subojects belonging to $\Mod_c(D_{X/k})$). This
$\mathfrak{C}(X')$ is a Tannakian subcategory and its Tannaka dual
group is precisely $\pi^{\rm \text{alg}}(X,x)\times_kk'$
\cite[10.38, 10.41]{De1}. But it is clear that this full subcategory
is also stable under taking subquotients. Thus the canonical map
$\pi^{\rm \text{alg}}(X',x')\to \pi^{\rm \text{alg}}(X,x)\times_kk'$
is surjective. The same argument applies to $X_s$ and $S$. Hence we
get a commutative diagram with the first row being exact
$$\xymatrix{\pi^{\rm
\text{alg}}(X'_{s'},x')\ar[r]^{a'}\ar@{>>}[d]&\pi^{\rm \text{alg}}(X',x')\ar[r]\ar@{>>}[d]&\pi^{\rm \text{alg}}(S',s')\ar@{>>}[d]\ar[r]&1\\
\pi^{\rm \text{alg}}(X_{s},x)\times_kk'\ar[r]^{a}&\pi^{\rm
\text{alg}}(X,x)\times_kk'\ar[r]&\pi^{\rm
\text{alg}}(S,s)\times_kk'\ar[r]&1}.$$ Since the image of $a'$ is
normal  the image of $a$ is normal too. Hence the image of
$\pi^{\rm \text{alg}}(X_{s},x)\to \pi^{\rm \text{alg}}(X,x)$ is
normal. Using the same argument employed in \ref{Theorem 3.12} we conclude
the proof of this proposition.
\end{proof}

\begin{thm}Let $f: X\to S$ be a proper smooth
morphism between two smooth connected schemes of finite type over a
field $k$ of characteristic 0. Suppose that all the geometric fibres of $f$ are connected.
Let $x\in X(k)$, $s\in S(k)$ and $f(x)=s$. Then the homotopy sequence
$$\pi^{\rm \text{alg}}(X_s,x)\to\pi^{\rm \text{alg}}(X,x)\to\pi^{\rm
\text{alg}}(S,s)\to 1$$ is exact.
\end{thm}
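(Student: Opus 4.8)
The plan is to reduce the statement to the exactness already established at a generic geometric point (\ref{Theorem 3.12}), by a base change together with a transcendental comparison of the fibres. First I would record what is already available for the rational point $s$: by the results of \S3.1 the right-hand arrow $\pi^{\rm alg}(X,x)\to\pi^{\rm alg}(S,s)$ is faithfully flat (\ref{complex}) and the conditions (a), (b) of \ref{General Theory}(3) hold. Exactly as in the proof of \ref{Theorem 3.12}, it then suffices to prove that the image $N_s$ of $\pi^{\rm alg}(X_s,x)\to\pi^{\rm alg}(X,x)$ is a normal subgroup scheme, for then $G:=\Coker(\pi^{\rm alg}(X_s,x)\to\pi^{\rm alg}(X,x))$ surjects onto $\pi^{\rm alg}(S,s)$ and condition (a) upgrades this surjection to an equivalence of representation categories. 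Moreover, by \ref{base change} normality may be tested after extending the base field, and it descends along any field extension by flat base change of the sequence; choosing a finitely generated subfield of $k$ over which $f$, $x$ and $s$ are defined and embedding it into $\C$, I reduce to the case $k=\C$.

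So assume $k=\C$. The transcendental input is Ehresmann's theorem: the analytic map $f^{\rm an}\colon X^{\rm an}\to S^{\rm an}$ is a locally trivial $C^{\infty}$ fibre bundle, so for every closed point $t\in S(\C)$ the image of $\pi_1^{\rm top}(X_t^{\rm an})\to\pi_1^{\rm top}(X^{\rm an})$ is the fixed normal subgroup $K=\Ker\big(\pi_1^{\rm top}(X^{\rm an})\to\pi_1^{\rm top}(S^{\rm an})\big)$, independent of $t$. Each fibre $X_t$ is proper, so $\Mod_c(D_{X_t/\C})$ is the category of local systems on $X_t^{\rm an}$ and $\pi^{\rm alg}(X_t,x_t)$ is the pro-algebraic completion of $\pi_1^{\rm top}(X_t^{\rm an})$; consequently the image $N_t$ of $\pi^{\rm alg}(X_t,x_t)\to\pi^{\rm alg}(X,x)$ is the Zariski closure of the image of $K$ under the monodromy homomorphism $\pi_1^{\rm top}(X^{\rm an})\to\pi^{\rm alg}(X,x)(\C)$. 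Since $K$ does not depend on $t$, neither does $N_t$ up to the inner automorphism coming from a change of path to the base point; in particular whether $N_t$ is normal is independent of the closed point $t$.

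It remains to produce one closed point at which the fibre image is normal, and here \ref{Theorem 3.12} enters. With $k_0=\C$, $S_0=S$ and $\overline{\kappa(S)}$ the algebraic closure of the function field, \ref{Theorem 3.12} yields exactness of the homotopy sequence of $f$ at the generic geometric point $\bar\eta$; in particular the image of $\pi^{\rm alg}(X_{\bar\eta},x_{\bar\eta})$ in $\pi^{\rm alg}(X_{\overline{\kappa(S)}},x_{\bar\eta})$ is normal. I then fix a field embedding $\iota\colon\overline{\kappa(S)}\hookrightarrow\C$ and put $\iota_0:=\iota|_{\C}$. Base change along $\iota$ carries this exact sequence to the homotopy sequence of the conjugate family $X^{\iota_0}\to S^{\iota_0}$ at the very general closed point $\iota(\bar\eta)\in S^{\iota_0}(\C)$, which is therefore exact, so the fibre image is normal at that closed point. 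By the constancy of the previous paragraph, now applied to $X^{\iota_0}\to S^{\iota_0}$, it is normal at every closed point, in particular at $\iota_0(s)$; and base change along the field automorphism $\iota_0$ (\ref{base change}) identifies the homotopy sequence of $X/S$ at $s$ with that of $X^{\iota_0}/S^{\iota_0}$ at $\iota_0(s)$. Hence $N_s$ is normal, and by the first paragraph the sequence at $s$ is exact.

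The hard part will be the last two paragraphs, namely passing from the scheme-theoretic generic geometric point, at which \ref{Theorem 3.12} is proved, to an honest closed complex point. Since $\overline{\kappa(S)}$ has positive transcendence degree over $\C$, any embedding $\overline{\kappa(S)}\hookrightarrow\C$ is necessarily not $\C$-linear, which forces the detour through a conjugate family; the topological constancy of $N_t$ supplied by Ehresmann's theorem is what makes this detour harmless, but it must be dovetailed carefully with the base-change invariance of the algebraic fundamental group so that normality is transported back to the original family $X/S$ at the given rational point $s$.
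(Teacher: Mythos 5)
Your architecture is the paper's own: reduce to $k=\C$, invoke \ref{Theorem 3.12} at the geometric generic point, exploit the abstract isomorphism $\overline{\kappa(S)}\cong\C$ together with Ehresmann's theorem to move from the generic geometric point to the given rational point $s$, and descend via \ref{base change}; your second paragraph is a correct unwinding of what the paper packages as \ref{3.15} (the fibres are proper, so $\pi^{\alg}(X_t,x_t)$ is the pro-algebraic completion of $\pi_1^{\rm top}(X_t^{\rm an})$ and $N_t$ is the Zariski closure of the monodromy image of the topological kernel). But the field-juggling in your third paragraph contains genuine errors. First, $\iota_0=\iota|_{\C}$ is \emph{never} an automorphism of $\C$: if $\iota_0$ were surjective, then for any $t\in\overline{\kappa(S)}\setminus\C$ one would have $\iota(t)\in\C=\iota(\C)$, contradicting injectivity of $\iota$. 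Consequently there is no ``identification'' of the homotopy sequence of $X/S$ at $s$ with that of $X^{\iota_0}/S^{\iota_0}$ at $\iota_0(s)$: $\iota_0\colon\C\hookrightarrow\C$ is a nontrivial field extension, $\pi^{\alg}$ does not commute with base change, and \ref{base change} supplies only a surjection $\pi^{\alg}(X^{\iota_0},x^{\iota_0})\twoheadrightarrow\pi^{\alg}(X,x)\times_{\C,\iota_0}\C$, hence only one-way descent (normality or exactness upstairs implies it downstairs). That happens to be the direction you need, so this step is repairable. Second, and for the same reason, carrying the exact sequence from $\overline{\kappa(S)}$ to the conjugate family ``along the embedding $\iota$'' is the invalid ascending direction unless $\iota$ is an \emph{isomorphism} $\overline{\kappa(S)}\xrightarrow{\sim}\C$; this can be arranged (equal transcendence degree over $\Q$ — precisely the paper's observation), but ``fix a field embedding'' does not guarantee it, and for a proper embedding the step fails. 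With these two corrections your third paragraph becomes, after relabeling along $\iota$, literally the paper's argument: the paper applies \ref{3.15} to $X_K\to S_K$ over $K=\overline{\kappa(S)}\cong\C$ to move the base point from $\eta$ to $s_K$ inside one family, then descends once along $\C\subseteq K$ via \ref{base change}.

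There is also a gap in your opening reduction. Normality descends along field extensions (via the surjectivity diagram in \ref{base change}), but it does not formally ascend; so ``descend the data to a finitely generated $k_1$, prove normality over $\C\supseteq k_1$, conclude over $k$'' only yields normality over $k_1$, and nothing in your argument carries it back up to $k$. When $k$ embeds into $\C$ you can sidestep this by base changing along $k\hookrightarrow\C$ directly, but the theorem allows arbitrary $k$ of characteristic $0$, for instance of cardinality larger than the continuum, where no such embedding exists. The paper's reduction avoids this by working instance-wise with condition (c) of \ref{General Theory}: a single datum $(E,\nabla_E,\delta)$ descends to a finitely generated $k_1$, a witness $(F,\nabla_F)$ produced over $k_1$ pulls back to $k$, and since (a), (b) and the surjectivity hold over every base (\S 3.1), condition (c) over $k$ yields exactness. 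Your normality-only formulation loses exactly this pullback-stability of witnesses, and you need it (or a substitute) to cover general $k$.
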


\begin{proof} We have already checked the conditions (a) (b) and surjectivity in \S 3.1, so the only thing left is to show condition (c). For
any object $(E,\nabla_E)\in \Mod_c(D_{X/k})$ and any morphism
$$\delta: (F',\nabla_{F'})\subseteq
(E,\nabla_E)|_{X_s/k}\in\Mod_c(D_{X_s/k}),$$ there is a finitely
generated field over $\Q$ on which all these objects ($X$, $S$,
$(E,\nabla_E)$, $\cdots$) and morphisms ($f$, $x$, $s$, $\delta$,
$\cdots$) are defined. So we can reduce our problem to the case when
$k$ is a finitely generated field over $\Q$. But in light of \ref{base change} we can assume our field $k$ is actually $\C$.

Let $K$ be the algebraic closure of the function field of $S$. Since
$K$ and $\C$ have the same transcendental degree over $\Q$, they are
isomorphic as fields. Now $\eta:\Spec(K)\hookrightarrow S$ is a
geometric generic point, so by the discussion in \S 3.2 the
sequence$$\pi^{\rm \text{alg}}({X}_{\eta},\eta')\to\pi^{\rm
\text{alg}}(X_K,\eta')\to\pi^{\rm \text{alg}}(S_K,\eta)\to 1$$ is
exact (where $X_K$ (resp. $S_K$) is the base change of $X$ (resp.
$S$) from $k$ to $K$, and $\eta'$ is any chosen $K$-rational point
of $X_K$ above $\eta$). From \ref{3.15} we get a commutative
diagram of $K$-group schemes
$$\xymatrix{\pi^{\rm \text{alg}}({X}_{\eta},\eta')\ar[r]\ar[d]^{\cong}&\pi^{\rm \text{alg}}(X_K,\eta')\ar[r]\ar[d]^{\cong}&\pi^{\rm \text{alg}}(S_K,\eta)\ar[d]^{\cong}\ar[r]&1
\\\pi^{\rm \text{alg}}({X}_{s_K},x_K)\ar[r]&\pi^{\rm \text{alg}}(X_K,x_K)\ar[r]
&\pi^{\rm \text{alg}}(S_K,s_K)\ar[r]&1}.$$ Thus the last row is
exact. Then we can conclude our theorem by \ref{base change}.
\end{proof}

\begin{lem}\label{3.15} If $f:X\to S$ is a smooth proper morphism between two smooth quasi-compact
geometrically connected $\C$-schemes with geometrically connected
fibres, $x,x'$ and $s,s'$ are $\C$-rational points of $X$ and $S$
respectively with the property that $f(x)=s$ and $f(x')=s'$, then there exists a
commutative diagram of $\C$-group schemes
$$\xymatrix{\pi^{\rm \text{alg}}(X_{s'},x')\ar[r]\ar[d]^{\cong}&\pi^{\rm \text{alg}}(X,x')\ar[r]\ar[d]^{\cong}&\pi^{\rm \text{alg}}(S,s')\ar[d]^{\cong}\\
\pi^{\rm \text{alg}}(X_{s},x)\ar[r]&\pi^{\rm
\text{alg}}(X,x)\ar[r]&\pi^{\rm \text{alg}}(S,s)}.$$
\end{lem}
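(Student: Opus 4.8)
The plan is to establish this base-point comparison by the transcendental method promised in the introduction: I pass to the complex-analytic fibration attached to $f$, transport all the data along a single path, and translate the resulting topological identifications back into statements about the Tannaka groups. Since $f$ is proper and smooth, the induced map $f^{\mathrm{an}}\colon X^{\mathrm{an}}\to S^{\mathrm{an}}$ is a proper holomorphic submersion between connected complex manifolds, so by Ehresmann's fibration theorem it is a locally trivial $C^{\infty}$ fibre bundle with connected fibres over the connected base $S^{\mathrm{an}}$. I would then fix a smooth path $\delta\colon[0,1]\to X^{\mathrm{an}}$ from $x$ to $x'$ and set $\gamma:=f^{\mathrm{an}}\circ\delta$, a path in $S^{\mathrm{an}}$ from $s$ to $s'$. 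Choosing an Ehresmann connection for which $\delta$ is horizontal, the bundle transport along $\gamma$ produces a diffeomorphism $\psi\colon X_{s}^{\mathrm{an}}\xrightarrow{\ \sim\ }X_{s'}^{\mathrm{an}}$ with $\psi(x)=x'$. The structural input is that the homotopy exact sequence of the fibration is natural under change of base point along $\delta$, so that $\delta$, $\gamma$ and $\psi$ jointly realise a commutative ladder between the sequence $\pi_{1}^{\mathrm{top}}(X_{s}^{\mathrm{an}},x)\to\pi_{1}^{\mathrm{top}}(X^{\mathrm{an}},x)\to\pi_{1}^{\mathrm{top}}(S^{\mathrm{an}},s)$ and its primed analogue.

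For the middle and right columns I would argue with no properness of $X$ or $S$. For $(E,\nabla)\in\Mod_c(D_{X/\C})$ the analytic horizontal sections $\ker\nabla^{\mathrm{an}}$ form a local system on $X^{\mathrm{an}}$ whose stalk at $x$ is canonically identified with the fibre functor value $\omega_{x}(E,\nabla)=E_{x}$, since a flat connection admits local analytic flat frames. Hence parallel transport of $\ker\nabla^{\mathrm{an}}$ along $\delta$ furnishes a tensor isomorphism of fibre functors $\omega_{x}\cong\omega_{x'}$ on $\Mod_c(D_{X/\C})$, and likewise parallel transport along $\gamma$ gives $\omega_{s}\cong\omega_{s'}$ on $\Mod_c(D_{S/\C})$. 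By Tannakian duality these yield the vertical isomorphisms $\pi^{\alg}(X,x')\cong\pi^{\alg}(X,x)$ and $\pi^{\alg}(S,s')\cong\pi^{\alg}(S,s)$.

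The left column is where properness of the fibres enters, and this is the step I expect to be the main obstacle, because it compares two genuinely different schemes $X_{s}$ and $X_{s'}$ rather than merely changing the base point inside one fixed category. Since $X_{s}$ and $X_{s'}$ are smooth and proper, every object of $\Mod_c(D_{X_{s}/\C})$ is automatically regular singular, so the Riemann--Hilbert correspondence together with GAGA identifies $\Mod_c(D_{X_{s}/\C})$ with the category of local systems on $X_{s}^{\mathrm{an}}$, and similarly for $X_{s'}$. The diffeomorphism $\psi$ then induces an equivalence between these two categories, and because $\psi(x)=x'$ this equivalence carries the fibre functor at $x'$ to the one at $x$; Tannakian duality thus gives $\pi^{\alg}(X_{s'},x')\cong\pi^{\alg}(X_{s},x)$. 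Here properness is essential and is exactly what distinguishes the fibre column from the treatment of $X$ and $S$, where only an isomorphism of fibre functors, not an equivalence of categories, was available or needed.

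Finally I would verify commutativity of the two squares. The horizontal maps are induced by $X_{s}\hookrightarrow X$ and by $f$, which analytify to $X_{s}^{\mathrm{an}}\hookrightarrow X^{\mathrm{an}}$ and $f^{\mathrm{an}}$. For the right square, $\omega_{x}\circ f^{*}\cong\omega_{s}$ and $\ker(f^{*}\nabla)^{\mathrm{an}}\cong (f^{\mathrm{an}})^{*}\ker\nabla^{\mathrm{an}}$, so transport along $\delta$ of the former matches transport along $\gamma=f^{\mathrm{an}}\circ\delta$ of the latter, giving commutativity. For the left square one uses that bundle transport along $\gamma$ is a homotopy from the inclusion of $X_{s}^{\mathrm{an}}$ to $\psi$ followed by the inclusion of $X_{s'}^{\mathrm{an}}$, with $x$ tracked exactly by $\delta$; since homotopic maps induce isomorphic pullbacks of a local system, one obtains $\ker\nabla^{\mathrm{an}}\big|_{X_{s}^{\mathrm{an}}}\cong\psi^{*}\big(\ker\nabla^{\mathrm{an}}\big|_{X_{s'}^{\mathrm{an}}}\big)$ compatibly with the $\delta$-transport at $x$. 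The remaining work is purely the bookkeeping of arranging $\delta$, $\gamma$ and $\psi$ as genuinely compatible lifts, which is precisely why $\delta$ is fixed first and $\psi$ is then defined as transport along its image; once this is done the squares commute by the naturality of the fibration homotopy sequence, transported through Riemann--Hilbert.
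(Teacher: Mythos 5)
Your proposal is correct and follows essentially the same route as the paper: analytify, use Ehresmann's theorem to identify $\pi_1^{\rm top}$ of the two fibres along a path compatibly with the base-point-change isomorphisms on $X^{\rm an}$ and $S^{\rm an}$, then transfer everything through the Riemann--Hilbert correspondence (with properness of the fibres supplying the equivalence with $\Mod_c(D_{X_s/\C})$) and Tannakian duality. The only cosmetic difference is that where you construct a single global transport diffeomorphism $\psi$ from an Ehresmann connection, the paper obtains the same fibre comparison by an open-and-closed continuation argument over the connected base $S^{\rm an}$ --- same ingredients, same conclusion.
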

\begin{proof} From the sequences of $\C$-schemes: $$X_s\to X\xrightarrow{f}
S\ \ \ \ \ \ \ \ \ \ \text{and}\ \ \ \ \ \ \ \ \ X_{s'}\to
X\xrightarrow{f} S
$$ one gets sequences of analytic spaces: $$X^{an}_s\to X^{an}\xrightarrow{f^{an}} S^{an}\ \ \ \ \ \ \ \ \ \text{and}\ \ \ \ \ \ \ \ \ \ X^{an}_{s'}\to X^{an}\xrightarrow{f^{an}} S^{an},$$
where $X^{an}_s$ (resp. $X^{an}_{s'}$) is still the fibre of $s\in
S^{an}$ (resp. $s'\in S^{an}$) under $f^{an}$, for the functor
$-^{an}$ commutes with fibre product \cite[Expos\'e XII,
1.2]{SGA1}. Now applying the first homotopy functor (in topology) to these
analytic spaces one gets a commutative diagram:
$$\xymatrix{\pi_1^{\text{top}}(X^{an}_{s'},x')\ar[r]&\pi_1^{\text{top}}(X^{an},x')\ar[r]\ar[d]^{\cong}&\pi_1^{\text{top}}(S^{an},s')\ar[d]^{\cong}\\
\pi_1^{\text{top}}(X^{an}_{s},x)\ar[r]&\pi_1^{\text{top}}(X^{an},x)\ar[r]&\pi_1^{\text{top}}(S^{an},s)}.$$
In fact by carefully choosing a path between $x$ and $x'$, there
exists a group isomorphism
$$\pi_1^{\text{top}}(X^{an}_{s},x)\xrightarrow{\cong}\pi_1^{\text{top}}(X^{an}_{s'},x')$$ making the above diagramme commutative.

To show this, one first defines a subset $Z\subseteq S^{an}$ consisting
of points $t\in S^{an}$ which satisfy that there exists a point $y\in X^{an}$ lying above $t$, a path
$\alpha$ between $x$ and $y$, and an isomorphism
$$\pi_1^{\text{top}}(X^{an}_{s},x)\xrightarrow{\cong}\pi_1^{\text{top}}(X^{an}_{t},y)$$
making the diagram
$$\xymatrix{\pi_1^{\text{top}}(X^{an}_{s},x)\ar[r]\ar[d]^{\cong}&\pi_1^{\text{top}}(X^{an},x)\ar[d]^{\alpha}\\
\pi_1^{\text{top}}(X^{an}_{t},y)\ar[r]&\pi_1^{\text{top}}(X^{an},y)}$$
commutative. $Z$ is both open and closed, since for any $t\in
S^{an}$ by Ehresmann's theorem ($f^{an}$ is proper smooth by
\cite[Expos\'e XII, proposition 3.1 et proposition 3.2]{SGA1}) there exists a neighborhood $U$ of $t\in S^{an}$ such that ${f^{an}}^{-1}(U)$
is isomorphic to $X^{an}_t\times S^{an}$ as a topological space.
Thus for any $t'\in U$ one gets a commutative diagram
$$\xymatrix{\pi_1^{\text{top}}(X^{an}_t,y)\ar[r]\ar[d]^{\cong}&\pi_1^{\text{top}}(X^{an},y)\ar[d]^{\cong}\\\pi_1^{\text{top}}(X^{an}_{t'},y')\ar[r]&\pi_1^{\text{top}}(X^{an},y')}$$
by choosing any points $y,y'\in {f^{an}}^{-1}(U)$ and any path
$y\to y'$ inside ${f^{an}}^{-1}(U)$. Hence $t\in Z$ if and only
if $t'\subseteq Z$. This shows that $Z$ is both open and closed. On
the other hand, we know that $S^{an}$ is connected
(\cite[Expos\'e XII, proposition 2.4]{SGA1}). Thus $Z=S^{an}$ and
in particular $s'\in Z$.

Now let us denote the category of integrable analytic connections on
$X^{an}$ and $X_s^{an}$, $X_{s'}^{an}$ by $\Conn(X^{an})$ and
$\Conn(X_s^{an})$, $\Conn(X_{s'}^{an})$ respectively. By
Riemann-Hilbert correspondence one has a 2-commutative diagram of
neutral Tannakian categories (the $k$-linear tensor functors in
the diagram also respect the fibres functors):
$$\xymatrix@R=0.5cm{
                &         \Conn(X_{s}^{an}) \ar[dd]^{\cong}\ar[r]^-{\cong} &\Rep_{\C}(\pi_1^{\text{top}}(X^{an}_{s},x))\ar[dd]^{\cong}    \\
  \Conn(X^{an}) \ar[ur]^{\lambda} \ar[dr]_{\lambda'} &           &     \\
                &         \Conn(X_{s'}^{an})\ar[r]^-{\cong}     &\Rep_{\C}(\pi_1^{\text{top}}(X^{an}_{s'},x'))        }.$$
Set $\iota$ to be the canonical functor $\Mod_c(D_{X/\C})\to
\Conn(X^{an})$ sending an integrable algebraic connection on $X$ to
an integrable analytic connection on $X^{an}$. Then we have a
2-commutative diagram of neutral Tannakian categories:
$$\xymatrix{&&\Mod_c(D_{X_s/\C})\ar[r]^{\cong}&\Conn(X_s^{an})\ar[dd]^{\cong}\\\Mod_c(D_{S/\C})\ar[r]&\Mod_c(D_{X/\C})\ar[r]^{\iota}\ar[ur]^{\widetilde{\lambda}}\ar[dr]_{\widetilde{\lambda'}}&\Conn(X^{an})\ar[ur]^{\lambda}\ar[dr]_{\lambda'}&
\\&&\Mod_c(D_{X_{s'}/\C})\ar[r]^{\cong}&\Conn(X_{s'}^{an})}.$$
Now applying  Tannakian duality we can conclude the proof.
\end{proof}

\end{document}